\newcommand{\linktojournal}[1]{\relax}
\newcommand{\R}{\mathbb{R}}
\newcommand{\N}{\mathbb{N}}
\newcommand{\ep}{\varepsilon}
\newcommand{\pa}{\partial}
\newtheorem{theorem}{Theorem}[section]
\newtheorem{lemma}[theorem]{Lemma}
\newtheorem{proposition}[theorem]{Proposition}
\newtheorem{corollary}[theorem]{Corollary}
\theoremstyle{remark}
\newtheorem{remark}{Remark}[section]
\theoremstyle{definition}
\newtheorem{definition}{Definition}[section]
\numberwithin{equation}{section}
\newtheorem*{assumption}{Assumptions}
\def\@cite#1#2{[{{\bfseries #1}\if@tempswa , #2\fi}]}
\begin{document}
\addtolength{\baselineskip}{0.2pt}
\begin{center}

{\Large{\bf 
Appearance of Strauss-type exponent 
in semilinear 
\\[1pt]
wave equations 
with time-dependent 
speed of 
\\[4pt]
propagation
}}
\end{center}

\vspace{5pt}

\begin{center}
Motohiro Sobajima%
\footnote{
Department of Mathematics, 
Faculty of Science and Technology, Tokyo University of Science,  
2641 Yamazaki, Noda-shi, Chiba, 278-8510, Japan,  
E-mail:\ {\tt msobajima1984@gmail.com}}, 
Kimitoshi Tsutaya% 
\footnote{
Graduate School of Science and Technology, Hirosaki University, Hirosaki 036-8561, Japan,  
E-mail:\ {\tt tsutaya@hirosaki-u.ac.jp}}, 
Yuta Wakasugi%
\footnote{
Graduate School of Engineering, Hiroshima University, Higashi-Hiroshima 739-8527, Japan,  
E-mail:\ {\tt wakasugi@hiroshima-u.ac.jp}}
\end{center}

%%%%     ABSTRACT     %%%%%%
\newenvironment{summary}{\vspace{.5\baselineskip}\begin{list}{}{%
     \setlength{\baselineskip}{0.85\baselineskip}
     \setlength{\topsep}{0pt}
     \setlength{\leftmargin}{12mm}
     \setlength{\rightmargin}{12mm}
     \setlength{\listparindent}{0mm}
     \setlength{\itemindent}{\listparindent}
     \setlength{\parsep}{0pt}
     \item\relax}}{\end{list}\vspace{.5\baselineskip}}
\begin{summary}
{\footnotesize {\bf Abstract.}
In this paper, blowup phenomenon 
for the semilinear wave equation 
with time-dependent speed of propagation 
and scattering damping 
is considered
under the smallness of initial data.
Our result contains small data blowup for sub-Strauss exponent 
for the simplest semilinear wave equation 
and also the one for
semilinear generalized Tricomi equation. 
Key ingredient 
is so-called test function method (developed in Ikeda--Sobajima--Wakasa \cite{ISW2019JDE})
with a certain conservative quantity 
via a special solution with 
the Liouville--Green (or WKB) approximation.  
}
\end{summary}

{\footnotesize{\it Mathematics Subject Classification}\/ (2020): 
Primary:% 
%35L15,  	%Initial value problems for second-order hyperbolic equations
35L71, %\makebox[0pt]{\tiny\color{blue} \raisebox{-6pt}{Second-order semilinear hyperbolic equations}} 
%35L20, %Initial-boundary value problems for second-order hyperbolic equations
%	35L90, %  	Abstract hyperbolic equations
%	35B40, %	Asymptotic behavior of solutions
Secondary:% 
35Q85  	%\makebox[0pt]{\tiny\color{blue} \raisebox{-12pt}{PDEs in connection with astronomy and astrophysics}}
%	34G10, %	Linear equations
%	35E15, %	Initial value problems
%    35L71. % Second-order semilinear hyperbolic equations
}

{\footnotesize{\it Key words and phrases}\/: 
Semilinear wave equations; time-dependent speed of propagation; 
Liouville--Green approximation;
test function methods.
}

%\tableofcontents

\section{Introduction}
In this paper, we consider the Cauchy problem of 
semilinear wave equations in $\R^N$ $(N\in \N)$
with time-dependent coefficients of the following form:
\begin{equation}\label{intro:Pab}
\begin{cases}
\pa_t^2u-a(t)^2\Delta u + b(t)\pa_tu=|u|^p
&\text{in}\ \R^N\times (0,T), 
\\
(u,\pa_tu)(0)=(\ep f,\ep g)
&\text{in}\ \R^N, 
\end{cases}
\end{equation}
where 
the pair of coefficients $(a,b)$ belongs to $C^2([0,\infty))\times C^1([0,\infty))$ 
and the exponent of the nonlinearity is superlinear, that is, $p>1$.
In particular, the coefficient $a(t)$
for the Laplacian is assumed to be positive to prevent the degeneracy
and $b(t)$ is nonnegative. 
The functions $f$ and $g$ are 
assumed to be compactly supported and smooth,
and the parameter $\ep>0$ describes 
the smallness of initial data. 

The problem \eqref{intro:Pab} is motivated from the study of semilinear wave equation in
the spatially flat Friedmann--Lema\^{i}tre--Robertson--Walker (FLRW) spacetime:
\[
    \partial_t^2 u - \frac{1}{S(t)^2} \Delta u + \frac{NS'(t)}{S(t)} \partial_t u = |u|^p,
\]
where $S(t)$ is the scale factor describing
the expansion or contraction of the spatial metric
(see \cite{TsutayaWakasugi2020} and the references therein for more detail).
Moreover, the problem \eqref{intro:Pab} is related to the semilinear generalized Tricomi equation
\[
    \partial_t^2 u - (1+t)^{2\ell} \Delta u = |u|^p,
\]
where $\ell > -1$.
The equation \eqref{intro:Pab} can be considered as
a generalization of these models.

The aim of the present paper is 
to find blowup solutions with 
arbitrary small initial data
in view of wave concentration phenomena 
near the corresponding light-cone $\mathcal{C}=\{(x,t)\in \R^{N}\times [0,\infty)\,;\,|x|=A(t)\}$
with $A(t)=\int_0^ta(r)\,dr$ $(t\geq 0)$
when $b(t)$ is not identically zero but negligible in a suitable sense.
Our interest is how the conservative quantity
for the corresponding linear problem 
\begin{equation}\label{intro:Pab-lin}
\begin{cases}
\pa_t^2u-a(t)^2\Delta u + b(t)\pa_tu=0
&\text{in}\ \R^N\times (0,\infty), 
\\
(u,\pa_tu)(0)=(f,g)
&\text{in}\ \R^N
\end{cases}
\end{equation}
affects the blowup phenomenon of the semilinear problem \eqref{intro:Pab}.
In particular, under a specific behavior of
$a(t)$ and $b(t)$,
we give a candidate of
the critical exponent
$p_{\rm crit}$ for the problem \eqref{intro:Pab}, which is the threshold value of the power $p$ for existence/non-existence of global-in-time solutions.

The simplest case ($a\equiv 1$ and $b\equiv 0$) 
assigns the well-studied model of the semilinear wave equation of type 
\begin{equation}\label{intro:NW}
\begin{cases}
\pa_t^2u-\Delta u=|u|^p
&\text{in}\ \R^N\times (0,T), 
\\
(u,\pa_tu)(0)=(\ep f,\ep g)
&\text{in}\ \R^N.
\end{cases}
\end{equation}
After the pioneering work by John \cite{John1979}
dealing with the three-dimensional case, 
nowadays it is well known that for $1<p\leq p_{\rm S}(N)$ with 
\begin{align*}
p_{\rm S}(\nu)=\sup\{p>1\;;\;\gamma(\nu,p)>0\}, \quad \gamma_{\rm S}(\nu,p)=2
+(\nu+1)p-(\nu-1)p^2, 
\quad(\nu\geq 1)
\end{align*} 
the problem \eqref{intro:NW} does not possess non-trivial global-in-time solutions. 
On the contrary, if $p>p_{\rm S}(N)$ (with some restriction), then one can find global-in-time solutions
(e.g.,
John \cite{John1979}, Strauss \cite{Strauss1981},
Georgiev--Lindblad--Sogge \cite{GeorgievLindbladSogge1995},
Yordanov--Zhang \cite{YordanovZhang2006},
Zhou \cite{Zhou2007}
and the references therein).

The case of the damped wave equation ($a\equiv b\equiv 1$) also has 
many previous works for existence/non-existence of global-in-time solutions 
with the critical exponent
$p_{\rm crit}=p_{\rm F}(N) := 1+\frac{2}{N}$.
However,
such a case is out of our scope of the present paper
and then
we just refer the reader to
Matsumura \cite{Matsumura1976},
Todorova--Yordanov \cite{TodorovaYordanov2001},
and
Zhang \cite{Zhang2001}.

The next interesting case would be $a\equiv 1$ and 
the time-dependent damping coefficient $b(t)=\mu(1+t)^{-\beta}$.
The corresponding linear problem \eqref{intro:Pab-lin} 
has been systematically studied in the series of papers 
by Wirth
\cite{Wirth2004, Wirth2006, Wirth2007, Wirth2007ADE}
(see also Wirth \cite{WirthThesis}). 
Roughly speaking, 
solutions of \eqref{intro:Pab-lin} is similar (in some sense)
to the ones for the linear wave equation when $\beta>1$. 
In another case $\beta<1$, 
the solutions of \eqref{intro:Pab-lin} have somehow completely different asymptotics.
In the other case $\beta=1$,
the equation \eqref{intro:Pab-lin} is sometimes called the Euler--Poisson--Darboux equation and it has the ``scale-invariance'' 
which provides various (complicated) phenomena.
In the last decade, 
the semilinear problem \eqref{intro:Pab} with $a \equiv 1$ are intensively studied 
(c.f.,
Lin--Nishihara--Zhai \cite{LinNishiharaZhai2012},
Wakasugi \cite{Wakasugi2014},
D'Abbicco \cite{D'Abbicco2015, D'Abbicco2021},
D'Abbicco--Lucente--Reissig \cite{DLR2015},
Lai--Takamura \cite{LaiTakamura2018},
Lai--Takamura--Wakasa \cite{LaiTakamuraWakasa2017}, 
Ikeda--Sobajima \cite{IkedaSobajima2018}, 
Tu--Lin \cite{TuLin2017arXiv, TuLin2019},
Wakasa--Yordanov \cite{WakasaYordanov2019}). 
Roughly speaking, if
$b(t) = \mu (1+t)^{-\beta}$
with $\mu \ge 0$ and $\beta>1$, then (as is expected) the blowup phemomenon 
is almost the same as the case of 
the classical semilinear wave equation \eqref{intro:NW}.
In the case $\beta=1$,
namely the case of semilinear Euler--Poisson--Darboux equation, 
although some of open questions remain, 
it can be expected that the critical exponent 
is given by
$p_{\rm crit}=
\max\{ p_{\rm F}(N), p_{\rm S}(N+\mu) \}$;
note that the effect of the damping term appears in the critical exponent.

In the case
$a(t)= t^{\alpha}$ or $(1+t)^{\alpha}$ with $\alpha > 0$
and $b \equiv 0$,
the problem \eqref{intro:Pab} is called
the semilinear generalized Tricomi equation.
For $\alpha \in \N$,
He--Witt--Yin \cite{HeWittYin2017CVPDE, HeWittYin2017JDE, HeWittYin2020, HeWittYin2021}
found the critical exponent
$p_{\rm crit} = p_{\rm HWY}(N,\alpha)$,
where
\begin{align}
\label{intro:pHWY}
    p_{\rm HWY}(\nu,\alpha)
    &= \sup \{ p> 1 \ ; \  \gamma(\nu,\alpha;p) > 0 \},\\
\label{intro:gammaHWY}
    \gamma(\nu,\alpha;p)
    &=
    2+\Big(\nu+1-\frac{3\alpha}{1+\alpha}\Big)
    p
    -\Big(\nu-1+\frac{\alpha}{1+\alpha}\Big)
    p^2,
\end{align}
and they solved the problem
except for the critical case $p=p_{\rm HWY}(N,\alpha)$.
Later on, for general $\alpha > 0$,
Tu--Lin \cite{TuLin2019arXiv}
proved that the local weak solution must blow up in a finite time and
obtained the estimates of lifespan in
the subcritical and critical cases
$p \le p_{\rm HWY}(N,\alpha)$.
Note that
$p_{\rm HWY}(N,0)=p_{\rm S}(N)$ holds.

When $a(t) = (1+t)^{\alpha}$ with $-1 < \alpha \le 0$
and $b(t) = \mu(1+t)^{-1}$ with $\mu \ge 0$, the problem \eqref{intro:Pab} becomes a generalization of
the semilinear wave equation in the FLRW spacetime.
Tsutaya--Wakasugi \cite{TsutayaWakasugi2020, TsutayaWakasugi2021, TsutayaWakasugi2022}
and Palmieri \cite{Palmieri2021,Palmieri2023}
obtained blowup solutions for
\[
    1 < p \le p_{\rm crit}
    = \max \left\{ p_{\rm F}(N(1+\alpha)), p_{\rm HWY}\left( N+\frac{\mu}{1+\alpha},\alpha \right) \right\}.
\]
We remark that if
$\alpha=0$, then $p_{\rm crit}$ in the above formula reduces to
$\max\{ p_{\rm F}(N), p_{\rm S}(N+\mu) \}$,
and the result of the
semilinear Euler--Poisson--Darboux equation is also included;
if $\mu = 0$, then $p_{\rm crit}$ above coincides with
$p_{\rm HWY}(N,\alpha)$.
We also refer 
to a recent work 
\cite{LaiPalmieriTakamura_preprint}
about a similar blowup 
problem for a generalized model involving time-dependent mass term.

We shall move to 
more general cases of the coefficients.
For the case $a\equiv 1$ and 
$b\in L^1((0,\infty))$, 
in Wakasa--Yordanov \cite{WakasaYordanov2019}
they have succeeded in proving almost the same result 
as in the usual semilinear wave equation \eqref{intro:NW} 
including the critical case $p=p_{\rm S}(N)$ when $N\geq 2$. 
Related to the FLRW model, very recently, Tsutaya--Wakasugi \cite{TsutayaWakasugi_toappear} studied the problem \eqref{intro:Pab} with
general propagation speed $a(t)$ and the scattering damping coefficient
$b \in L^1((0,\infty))$.
They proved the blowup of solutions under the assumption
\begin{equation}\label{intro:TsutayaWakasugi}
    \begin{cases}
    p+1-N \sigma(p-1)>0 & \text { if } \int g(x) d x>0,
\\
    2-N \sigma(p-1)>0 & \text { if } \int g(x) d x=0,
%    2-N \sigma(p-1)>0 & \text { if } \int g(x) d x=0,
    \end{cases}
\end{equation}
where $\sigma = \limsup_{t\to \infty} (\log A(t)/ \log t )$ with $A(t) = \int_0^t a(r) \,dr$.
In particular, when $\sigma = 0$,
the blowup occurs for any $p>1$.
However, for general $\sigma > 0$ (even for the Minkowski spacetime case $\sigma=1$),
the above condition is not optimal.
One of the main purposes of this paper is to improve the above blowup conditions and
to give the expected critical exponent also for the case $\sigma>0$.

In the present paper, we give 
an argument to discover 
the blowup phenomenon
for a class of general semilinear wave equations
with a time-dependent coefficient of the Laplacian. 
The crucial idea is to use a certain conservative quantity 
for the corresponding linear problem \eqref{intro:Pab-lin},
which will be explained later.

To state the main result of the present paper, 
%Here 
we give the assumptions
for the coefficients $a$ and $b$.
\begin{assumption}
\begin{itemize}
\item[]
\item[]\hspace{-22pt}
{\bf (A1)}\ $0<a\in C^2([0,\infty))$ and $0\leq b\in C^1([0,\infty))\cap L^1((0,\infty))$,
\item[]\hspace{-22pt}
{\bf (A2)}
$\frac{d^2}{dt^2}(a^{-1}),\frac{d}{dt}(a^{-1}b)\in L^1((0,\infty))$, 
$\frac{d}{dt}(a^{-\frac{1}{2}})\in L^2((0,\infty))$, 
\item[]\hspace{-22pt}
{\bf (A3)}
$\lim\limits_{t\to \infty}\frac{d}{dt}(a^{-1})=\lim\limits_{t\to \infty}(a^{-1}b)=0$.
\end{itemize}
\end{assumption}
We also give the precise definition of solutions to \eqref{intro:Pab}.

\begin{definition}\label{def:sol}
A function $u$ is called a {\it strong solution} of \eqref{intro:Pab} in $(0,T)$
if $u$ belongs to 
\[
C([0,T);H^2(\R^N))
\cap 
C^1([0,T);H^1(\R^N))
\cap 
C^2([0,T);L^2(\R^N))\cap C([0,T);L^{2p}(\R^N))
\] 
and satisfies 
$\pa_t^2u-a(t)^2\Delta u+b(t)\pa_tu=|u|^p$ 
in 
the meaning of $C([0,T);L^2(\R^N))$
together with $u(0)=\ep f$ and $\pa_tu(0)=\ep g$.

The lifespan $T_{\ep}(f,g)$ of such a solution $u$ 
is defined as the maximal existence time $T$ of solutions, that is, 
\[
T_{\ep}(f,g)=\sup\{T>0\;;\;\exists\,u\text{ s.t. there exists a strong solution of \eqref{intro:Pab} in $(0,T)$}\}.
\]
\end{definition}

It is not difficult to prove existence and uniqueness of local-in-time strong solutions to 
\eqref{intro:Pab}. 
For the reader's convenience, in Appendix we briefly give a proof 
through 
%the framework of 
the Liouville transform explained below.
We state the expected assertion for existence and uniqueness of 
local-in-time strong solutions to the problem \eqref{intro:Pab} is as follows; 
note that the restriction on $p$ comes from the validity of $|f|^p\in H^{1}(\R^N)$ for $f\in H^2(\R^N)$.
\begin{proposition}
Assume that $a$ and $b$ satisfy {\bf (A1)} and 
\begin{equation}\label{intro:p:existence}
p\in \begin{cases}
(1,\infty) 
& \text{if}\ 1\leq N\leq 4, 
\\
(1,\frac{N-2}{N-4}]& \text{if}\ N\geq 5.
\end{cases}
\end{equation}
%$1<p<p_{*}(N)$, where 
%$p_{*}(1)=p_{*}(2)=
%p_{*}(3)=p_{*}(4)
%=\infty$ and $p_{*}(N)=\frac{N-2}{N-4}$ for $N\geq 5$. 
Then for every $(f,g)\in H^2(\R^N)\times H^1(\R^N)$ and $\ep\in (0,1]$, 
one has $T_\ep=T_\ep(f,g)>0$. 

Moreover, if 
${\rm supp}\,f\cup {\rm supp}\,g\subset B(0,r_0)$, 
then the corresponding strong solution $u$ of \eqref{intro:Pab} satisfies 
\begin{equation}\label{intro:FSP}
{\rm supp}\,u(t)\subset B(0,r_0+A(t)), \quad t\in (0,T_\ep),
\end{equation}
where $B(0,r)=\{x\in\R^N;|x|<r\}$.
\end{proposition}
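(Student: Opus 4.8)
The plan is to reduce \eqref{intro:Pab} to a semilinear wave equation with a bounded damping coefficient by means of the Liouville transform, run a standard contraction-mapping argument in the transformed variables, and finally transfer both the regularity class of Definition \ref{def:sol} and the support property \eqref{intro:FSP} back to the original equation. First I would introduce the new time variable $s=A(t)=\int_0^t a(r)\,dr$. Since $0<a\in C^2([0,\infty))$, the map $t\mapsto s$ is a $C^2$-diffeomorphism of $[0,\infty)$ onto $[0,A(\infty))$, bi-Lipschitz on every compact subinterval. Writing $\pa_t=a\,\pa_s$ and denoting by $c(s)$ and $g(s)$ the quantities $\frac{a'(t)+a(t)b(t)}{a(t)^2}$ and $a(t)^{-2}$ read off in the variable $s$, the equation transforms into
\[
\pa_s^2 v-\Delta v+c(s)\,\pa_s v=g(s)\,|v|^p,
\qquad v(s,x)=u(t(s),x).
\]
By {\bf (A1)} the coefficients $c$ and $g$ are continuous, hence bounded on every compact interval $[0,S_0]$, and the regularity class of Definition \ref{def:sol} is preserved under this change of time because $a$ is positive and $C^2$. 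Thus it suffices to solve the transformed Cauchy problem locally in $H^2(\R^N)\times H^1(\R^N)$.

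Next I would set up a fixed-point scheme on $X_S=C([0,S];H^2(\R^N))\cap C^1([0,S];H^1(\R^N))$ for small $S\in(0,S_0]$, in a closed ball whose radius is governed by $\|(f,g)\|_{H^2\times H^1}$. For the linear inhomogeneous problem $\pa_s^2 w-\Delta w+c(s)\,\pa_s w=F$ with $F\in C([0,S];H^1)$, standard energy estimates at the $L^2$-, $H^1$- and $H^2$-levels—differentiating the equation and absorbing the bounded term $c\,\pa_s w$ via Gronwall's inequality—yield a unique solution $w\in X_S$ together with the a priori bound
\[
\|w\|_{X_S}\le C(S_0)\Big(\|(f,g)\|_{H^2\times H^1}+\int_0^S\|F(\tau)\|_{H^1}\,d\tau\Big).
\]
The solution map is $\Phi(v)=w$ with $F=g\,|v|^p$, and the crux is the nonlinear estimate asserting that $v\mapsto|v|^p$ is bounded and locally Lipschitz from $H^2(\R^N)$ into $H^1(\R^N)$.

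The main obstacle is exactly this nonlinear estimate, and it is here that the restriction \eqref{intro:p:existence} enters. For $1\le N\le 4$ one has $H^2(\R^N)\hookrightarrow L^q(\R^N)$ for every $q<\infty$ (and into $L^\infty$ when $N\le 3$), so $\||v|^p\|_{H^1}$ is controlled by $\|v\|_{H^2}$ for all $p>1$. For $N\ge 5$ one only has $H^2(\R^N)\hookrightarrow L^{2N/(N-4)}(\R^N)$ and $\nabla v\in H^1\hookrightarrow L^{2N/(N-2)}(\R^N)$; estimating $\nabla(|v|^p)=p\,|v|^{p-1}\operatorname{sgn}(v)\,\nabla v$ by Hölder's inequality as $\||v|^{p-1}\|_{L^N}\|\nabla v\|_{L^{2N/(N-2)}}$ forces $N(p-1)\le \frac{2N}{N-4}$, that is $p\le\frac{N-2}{N-4}$, which is precisely \eqref{intro:p:existence} (and also guarantees $H^2\hookrightarrow L^{2p}$, so the $L^{2p}$-class in Definition \ref{def:sol} is automatic). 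The accompanying Lipschitz bound comes from the pointwise inequality $\big||v_1|^p-|v_2|^p\big|\le C\big(|v_1|^{p-1}+|v_2|^{p-1}\big)|v_1-v_2|$ and its gradient analogue, handled by the same Sobolev–Hölder estimates; the technical point worth watching is the range $1<p<2$, where these differences are only Hölder in $v$, so I would close the contraction in the weaker norm of $C([0,S];H^1)\cap C^1([0,S];L^2)$ and recover the full $H^2$-regularity from the uniform energy bound above. Banach's fixed-point theorem then gives a unique local solution $v$ for $S$ sufficiently small.

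Finally, transforming back through $s=A(t)$ recovers $u$ in the class of Definition \ref{def:sol} and produces $T_\ep=T_\ep(f,g)>0$. The support property \eqref{intro:FSP} is inherited from the transformed equation, whose finite propagation speed equals $1$: the usual energy estimate on backward light cones $\{|x-x_0|\le S-s\}$—in which the damping $c(s)\pa_s v$ again only contributes a harmless bounded term—shows that $\supp v(s,\cdot)\subset B(0,r_0+s)$ whenever $\supp f\cup\supp g\subset B(0,r_0)$. Since $s=A(t)$, this is exactly $\supp u(t)\subset B(0,r_0+A(t))$ for $t\in(0,T_\ep)$, completing the proposal.
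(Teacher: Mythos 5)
Your proposal is correct and follows essentially the same route as the paper's Appendix: a change of time variable $s=A(t)$ reducing to a unit-speed semilinear wave equation with bounded lower-order coefficients, a contraction argument whose key nonlinear estimate $|v|^p\colon H^2(\R^N)\to H^1(\R^N)$ is exactly where \eqref{intro:p:existence} enters, closure of the fixed point in the weaker $H^1\times L^2$ metric on an $H^2\times H^1$-bounded ball, and finite propagation speed for \eqref{intro:FSP}. The only immaterial difference is that the paper uses the full Liouville transform $u=a(t)^{-1/2}w(x,A(t))$, whose amplitude factor cancels the first-order term $c(s)\pa_s v$ produced by your bare time change, trading it for a bounded potential term $\widetilde{V}(s)w$.
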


We are in a position to state the main result of the present paper.
\begin{theorem}\label{thm:main}
Assume that $a$ and $b$ satisfy {\bf (A1)} 
and \eqref{intro:p:existence}.
Let the pair $(f,g)\in H^2(\R^N)\times H^1(\R^N)$
satisfy 
\begin{equation}\label{ass:ini}
f,g\geq 0,\quad f+g\not\equiv 0,\quad {\rm supp}\,f\cup {\rm supp}\,g\subset B(0,r_0).
\end{equation}
Then there exist positive constants $\ep_0$ and $C$ such that 
the corresponding lifespan $T_\ep=T_\ep(f,g)$ 
of the strong solution of \eqref{intro:Pab} satisfies for every $\ep\in (0,\ep_0]$, 
\begin{equation}\label{eq:thm-eq1}
M_g\ep+M_f^p\ep^p \leq C T_\ep^{-\frac{p+1}{p-1}}A(T_\ep)^{N},
\end{equation}
where $M_f=\int_{\R^N}f\,dx$ and $M_g=\int_{\R^N}g\,dx$.
Moreover, if $a$ and $b$ also satisfy 
{\bf (A2)} and {\bf (A3)}, then
\[
\ep
\leq 
C	T_{\ep}^{-\frac{p^2+1}{p(p-1)}}A(T_\ep)^\frac{N}{p}
\left(\int_{0}^{T_\ep} a(t)^{\frac{p'}{2}}[1+A(t)]^{N-1-\frac{N-1}{2}p'}\,dt
\right)^{\frac{1}{p'}},
\]
where $p'=\frac{p}{p-1}$ is the H\"older conjugate of $p$.
\end{theorem}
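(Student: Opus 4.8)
The plan is to combine the test function method of \cite{ISW2019JDE} with a conserved quantity built from an explicit approximate solution of the \emph{adjoint} linear problem. Since the Laplacian carries the time-dependent factor $a(t)^2$ and $b$ appears at first order, I would look for a test function of the separated form $\psi(x,t)=\Lambda(t)\Phi(x)$, where $\Phi>0$ is the Yordanov--Zhang function solving $\Delta\Phi=\Phi$ with the asymptotics $\Phi(x)\sim|x|^{-(N-1)/2}e^{|x|}$, and $\Lambda$ is the decaying solution of the scalar ODE $\Lambda''-b\Lambda'-(a^2+b')\Lambda=0$, chosen so that $\psi$ solves $\partial_t^2\psi-a(t)^2\Delta\psi-\partial_t(b\psi)=0$ exactly. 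The first real task is to produce $\Lambda$ via the Liouville--Green (WKB) approximation, with leading behavior $\Lambda(t)\sim a(t)^{-1/2}\exp(-A(t))$; here assumptions {\bf (A2)} and {\bf (A3)} are precisely what make the WKB correction terms integrable in time, so that $\Lambda$ is a genuine positive solution with the stated amplitude and derivative asymptotics $\Lambda(0)>0$, $\Lambda'(0)<0$.

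With $\psi$ in hand, set $G_0(t)=\int_{\R^N}u\,\Phi\,dx$ and define the conserved quantity $F(t)=\int(\partial_tu\,\psi-u\,\partial_t\psi+b\,u\,\psi)\,dx=\Lambda G_0'-\Lambda'G_0+b\Lambda G_0$. A direct computation using the equation and $\Delta\Phi=\Phi$ gives $F'(t)=\int|u|^p\psi\,dx\ge0$, so $F$ is nondecreasing, and the sign conditions $f,g\ge0$, $f+g\not\equiv0$ together with $\Lambda(0)>0$, $\Lambda'(0)<0$ yield $F(0)\ge c_0\epsilon>0$. Writing $H=G_0/\Lambda$, the identity $F=\Lambda^2(H'+bH)$ turns $F\ge c_0\epsilon$ into $(e^{B}H)'\ge c_0\epsilon\,e^{B}\Lambda^{-2}$ with $B=\int_0^tb$; integrating and using $b\in L^1$ to control $e^{B}$ produces the linear lower bound $G_0(t)\gtrsim\epsilon\,a(t)^{-1/2}\exp(A(t))$.

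The Strauss-type weight then emerges by converting this into a bound on the nonlinearity through the \emph{unweighted} H\"older inequality on the light cone: $G_0(t)=\int u\,\Phi\le(\int_{|x|\le r_0+A(t)}|u|^p\,dx)^{1/p}(\int_{|x|\le r_0+A(t)}\Phi^{p'}\,dx)^{1/p'}$, where the asymptotics of $\Phi$ give $\int_{|x|\le r_0+A}\Phi^{p'}\,dx\sim[1+A]^{(N-1)(1-p'/2)}e^{p'A}$. It is essential to route the estimate through $\int|u|^p$ (pairing with $\Phi^{p'}$) rather than through $\int|u|^p\Phi$ (which pairs with $\int\Phi$ and would yield only the weaker Glassey-type exponent). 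Feeding the linear lower bound into this inequality gives a pointwise-in-$t$ lower bound on $\int|u|^p\,dx$; combined with the integrated form of the conservation identity cut off in time at scale $T$, the spatial measure of the cone supplies the factor $A(T)^{N/p}$, the $\Phi^{p'}$-integral assembles into $W(T)^{1/p'}=\big(\int_0^{T}a^{p'/2}[1+A]^{N-1-\frac{N-1}{2}p'}\,dt\big)^{1/p'}$, and the cutoff scale produces the time power $T^{-(p^2+1)/(p(p-1))}$, giving the claimed inequality.

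The hard part will be two-fold. First, making the Liouville--Green construction rigorous: one must show the decaying $\Lambda$ exists, stays positive, and has the amplitude $\sim a^{-1/2}e^{-A}$ with error terms integrable in time, which is exactly where {\bf (A2)}--{\bf (A3)} are consumed; any loss here would corrupt the power of $a$ inside $W(T)$. Second, closing the argument so that the \emph{Strauss} exponent (rather than a weaker one) appears: this requires exploiting the coupling between $G_0=\int u\,\Phi$ and $\int|u|^p$ with the correct H\"older pairing and tracking the space-time weights exactly, so that in the flat case $a\equiv1$, $b\equiv0$ the lifespan inequality specializes to the known sub-Strauss bound $T_\epsilon\lesssim\epsilon^{-2p(p-1)/\gamma_{\rm S}(N,p)}$, which serves as a consistency check on the bookkeeping.
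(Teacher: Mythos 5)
Your treatment of the second (Strauss-type) inequality follows the same strategy as the paper: a conserved quantity built from the Yordanov--Zhang function $\Phi$ with $\Delta\Phi=\Phi$ times a decaying time factor produced by the Liouville--Green approximation with amplitude $a(t)^{-1/2}e^{-A(t)}$ (and {\bf (A2)}--{\bf (A3)} consumed exactly where you say, in making the WKB error integrable after the reduction $\Lambda=e^{B/2}\mu$, $\mu''=(a^2+\tfrac{b'}{2}+\tfrac{b^2}{4})\mu$), the sign argument at $t=0$, and the \emph{unweighted} H\"older pairing $\int u\Phi\le\|u\|_{L^p}\|\Phi\|_{L^{p'}({\rm supp}\,u(t))}$ on the light cone --- your remark that pairing with $\int|u|^p\Phi$ would only give a Glassey-type exponent is precisely the point. (Your $\Lambda$ solves the adjoint ODE while the paper's $m_*$ solves $m''+bm'=a^2m$; since $\Lambda=e^{B}m_*$ and $b\in L^1$, these are the same object.) Where you genuinely diverge is the closing step: you first integrate $F'\ge0$ to obtain the pointwise lower bound $\int u\Phi\gtrsim\epsilon\,a^{-1/2}e^{A}$ and hence a pointwise-in-$t$ lower bound on $\int|u|^p\,dx$, in the style of Yordanov--Zhang and Lai--Takamura, whereas the paper never produces pointwise lower bounds: it integrates the conservation identity against a cutoff $\psi_R$ and applies H\"older in space-time, landing directly on the stated inequality with $W(T)^{1/p'}$. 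Your route, carried out carefully, gives $\epsilon^p\int_0^{cT}F^{1-p}\,dt\lesssim T^{-\frac{p+1}{p-1}}A(T)^N$ with $F=a^{p'/2}[1+A]^{(N-1)(1-p'/2)}$; this is not literally the stated inequality, but it implies it via H\"older in $t$ (namely $T\le\big(\int_0^TF\big)^{1/p'}\big(\int_0^TF^{1-p}\big)^{1/p}$) and is in general strictly stronger, and both specialize to $T_\epsilon\lesssim\epsilon^{-2p(p-1)/\gamma_{\rm S}(N,p)}$ in the flat case. So this part of the proposal is sound, modulo the routine work of proving positivity of $\Lambda$ down to $t=0$ (the paper does this via monotonicity of $e^{B}m_*'m_*$ on a compact interval).

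The genuine omission is the first assertion \eqref{eq:thm-eq1}, which your proposal does not prove. It is not decorative: the upper bound $\int_0^R\psi_R\int_{\R^N}|u|^p\,dx\,dt\le CR^{-\frac{p+1}{p-1}}A(R)^{N}$, obtained from the conservation law for the trivial solution $\Phi\equiv1$ (i.e. $e^{B(t)}\int\pa_tu\,dx=\epsilon M_g+\int_0^te^{B}\int|u|^p$) combined with the cone volume, is exactly the ``integrated form of the conservation identity'' that your final step relies on, so it must be established first and stated as such. Moreover the term $M_f^p\epsilon^p$ on the left of \eqref{eq:thm-eq1} requires a separate short argument: the same identity gives $\int u(t)\,dx\ge\epsilon M_f$ for $t\in[0,1]$, and H\"older over the fixed ball $B(0,r_0+A(1))$ yields $\int_0^1\int|u|^p\gtrsim\epsilon^pM_f^p$, which is then inserted on the left of the cutoff inequality since $\psi_R\equiv1$ on $[0,1]$ for $R$ large. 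You should add both pieces to have a complete proof of the theorem as stated.
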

\begin{remark}
If 
\[
T^{-\frac{p^2+1}{p(p-1)}}A(T)^\frac{N}{p}
\left(\int_{0}^T a(t)^{\frac{p'}{2}}[1+A(t)]^{N-1-\frac{N-1}{2}p'}\,dt
\right)^{\frac{1}{p'}}
\to 0	\quad \text{as}\ T\to \infty,
\]
then Theorem \ref{thm:main} provides a blowup phenomenon for arbitrary small initial data.
\end{remark}
If 
$a$ and $b$ behave like power type functions at $t\to \infty$, that is, 
\[
a(t)\sim t^{\alpha}, \quad b(t)\sim 
t^{-\beta}\quad\text{as}\ t\to \infty
\]
($\alpha,\beta\in\R$) in a suitable sense, the assertion can be reduced to the following 
corollary
which is quite similar to the case of
semilinear generalized Tricomi equations
discussed in \cite{HeWittYin2017CVPDE, HeWittYin2017JDE, HeWittYin2020, HeWittYin2021}. 
Recall that 
the corresponding critical exponent 
$p_{\rm HWY}(N,\alpha)$ is given in \eqref{intro:pHWY}.

%To describe our result for this case, we use 
%the exponent $p_{\rm HWY}(N,\alpha)$ given in the series of papers by He--Witt--Yin (\cite{HeWittYin2017CVPDE, HeWittYin2017JDE, HeWittYin2020, HeWittYin2021}):
%\begin{align*}
%    p_{\rm HWY} (N, \alpha)
%    &= \sup \{ p> 1 \ ; \  \gamma(N,\alpha;p) > 0 \},\\
%    \gamma(N,\alpha;p)
%    &=
%    2+\Big(N+1-\frac{3\alpha}{1+\alpha}\Big)
%    p
%    -\Big(N-1+\frac{\alpha}{1+\alpha}\Big)
%    p^2.
%\end{align*}
%
%The statement is the following:
\begin{corollary}\label{intro:cor}
Assume that 
there exist constants $\alpha>-1$, $\beta>1$ and $K\geq 1$ such that 
the coefficients $a$ and $b$ satisfy 
{\bf (A1)} with 
\begin{gather*}
K^{-1}\leq (1+t)^{-\alpha}a(t) \leq K, 
\\
(1+t)^{1-\alpha}|a'(t)|+(1+t)^{2-\alpha}|a''(t)|\leq K,
\\
(1+t)^{\beta}b(t)+(1+t)^{1+\beta}|b'(t)|\leq K
\end{gather*}
for $t\geq0$. If $1<p<p_{\rm HWY}(N, \alpha)$ (or equivalently $\gamma(N,\alpha;p)>0$) 
with \eqref{intro:p:existence}, 
then the lifespan $T_\ep=T_\ep(f,g)$ 
of the strong solution to \eqref{intro:Pab}
with $(f,g)\in H^2(\R^N)\times H^1(\R^N)$ 
satisfying \eqref{ass:ini} has the estimate
\[
T_\ep \leq C \ep ^{-\frac{2p(p-1)}{(1+\alpha)\gamma(N,\alpha;p)}}
\] 
for sufficiently small $\ep>0$ and a positive constant $C$ independent of $\ep$,
where $p_{\rm HWY}(N,\alpha)$ and $\gamma(N,\alpha;p)$ are given in \eqref{intro:pHWY} and \eqref{intro:gammaHWY}, respectively.
\end{corollary}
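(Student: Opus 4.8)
The plan is to deduce Corollary \ref{intro:cor} directly from the second (sharper) estimate of Theorem \ref{thm:main}, so the first task is to check that the power-type bounds on $a,b$ force the structural hypotheses \textbf{(A2)} and \textbf{(A3)} (with \textbf{(A1)} already granted). From $K^{-1}(1+t)^\alpha\le a(t)\le K(1+t)^\alpha$ and the stated control on $a',a''$, the quotient rule gives $\frac{d}{dt}(a^{-1})=O((1+t)^{-\alpha-1})$, $\frac{d^2}{dt^2}(a^{-1})=O((1+t)^{-\alpha-2})$ and $\frac{d}{dt}(a^{-1/2})=O((1+t)^{-\alpha/2-1})$, so the $L^1$ and $L^2$ requirements in \textbf{(A2)} hold precisely because $\alpha>-1$; similarly $\frac{d}{dt}(a^{-1}b)=O((1+t)^{-\alpha-\beta-1})\in L^1$ and $a^{-1}b=O((1+t)^{-\alpha-\beta})\to 0$ since $\alpha+\beta>0$, and $\frac{d}{dt}(a^{-1})\to 0$ as well. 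Thus Theorem \ref{thm:main} applies, and it remains only to track the $T$-dependence of its right-hand side as $T_\ep\to\infty$.

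Next I would insert the asymptotics. Since $a(t)\asymp(1+t)^\alpha$ with $\alpha>-1$, we get $A(T)=\int_0^T a\asymp(1+T)^{1+\alpha}$, hence $1+A(t)\asymp(1+t)^{1+\alpha}$ and $A(T)^{N/p}\asymp(1+T)^{(1+\alpha)N/p}$. The integrand in the last factor then behaves like $(1+t)^\theta$ with
\[
\theta=\frac{\alpha p'}{2}+(1+\alpha)(N-1)\Big(1-\frac{p'}{2}\Big),
\]
so, in the regime where it grows, $\int_0^{T}a^{p'/2}[1+A]^{N-1-\frac{N-1}{2}p'}\,dt\asymp(1+T)^{\theta+1}$ and this factor contributes $(1+T)^{(\theta+1)/p'}$.

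Collecting the three contributions, the second estimate becomes $\ep\le C(1+T_\ep)^{E}$ with
\[
E=-\frac{p^2+1}{p(p-1)}+\frac{(1+\alpha)N}{p}+\frac{\theta+1}{p'}.
\]
The heart of the matter is the algebraic identity $2p(p-1)\,E=-(1+\alpha)\gamma(N,\alpha;p)$, which I would verify by clearing denominators (using $\tfrac{1}{p'}=\tfrac{p-1}{p}$ and $\tfrac{p'}{2}=\tfrac{p}{2(p-1)}$) and matching the coefficients of $N(1+\alpha)$, of $\alpha$, and the constant term against \eqref{intro:gammaHWY}. Granting this, $E=-\frac{(1+\alpha)\gamma(N,\alpha;p)}{2p(p-1)}<0$ under the subcriticality $\gamma(N,\alpha;p)>0$, and inverting $\ep\le C(1+T_\ep)^{E}$ gives exactly $T_\ep\le C\ep^{-2p(p-1)/((1+\alpha)\gamma(N,\alpha;p))}$, as claimed; all comparison constants depend only on $K,\alpha,\beta,N,p$ and may be absorbed into $C$.

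The step I expect to be most delicate is the evaluation of the weighted integral, because the exponent $\theta+1$ need not be positive: for certain ranges of $N,\alpha,p$ (typically small $p$, or large $N$ where \eqref{intro:p:existence} forces $p$ near $1$) the integrand decays and the integral converges, so the naive power counting must be replaced by the correct borderline (logarithmic) or bounded behavior. In that case one must check carefully whether the exponent still collapses to $-\frac{(1+\alpha)\gamma}{2p(p-1)}$ or whether the first estimate of Theorem \ref{thm:main} must instead be invoked to supply the bound; disentangling this dichotomy, rather than the identity itself, is where the real care is required.
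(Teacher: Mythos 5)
Your reduction is the intended one, and most of it is sound: the verification of {\bf (A1)}--{\bf (A3)} from the power-type bounds is correct, $1+A(t)\asymp(1+t)^{1+\alpha}$ is correct, and the exponent identity $2p(p-1)E=-(1+\alpha)\gamma(N,\alpha;p)$ does hold (I checked the algebra). However, the ``delicate step'' you flag at the end is a genuine gap, not a technicality, and it cannot be closed from the \emph{statement} of Theorem \ref{thm:main} alone. The case $\theta+1\le 0$ really occurs inside the admissible range: for $N=3$, $\alpha=0$ one has $\theta+1=3-p'$, so $\theta+1\le 0$ for all $1<p\le\frac32$, while $\gamma(3,0;p)>0$ there. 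In that regime $\int_0^{T}(1+t)^{\theta}dt=O(1)$ and the second estimate of the theorem only yields $\ep\le CT^{E''}$ with $E''=E-\frac{\theta+1}{p'}>E$; concretely, for $N=3$, $\alpha=0$, $p=1.2$ this gives $T_\ep\lesssim\ep^{-0.131}$ and the first estimate \eqref{eq:thm-eq1} gives at best $T_\ep\lesssim\ep^{-0.125}$, whereas the corollary claims $T_\ep\lesssim\ep^{-0.123}$. So neither stated estimate implies the claimed rate there, and the dichotomy you propose (falling back on the first estimate) does not resolve it.

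The missing ingredient sits inside the proof of Theorem \ref{thm:main}: what is actually established there is
\[
\ep\le C\,R^{-\frac{p^2+1}{p(p-1)}}\big(r_0+A(R)\big)^{\frac{N}{p}}\left(\int_{\supp\psi_R'} a(t)^{\frac{p'}{2}}\big(1+r_0+A(t)\big)^{N-1-\frac{N-1}{2}p'}\,dt\right)^{\frac{1}{p'}},
\]
with the time integral taken only over $\supp\psi_R'$, which by the choice \eqref{eq:psi_R} and $e^{-B(+\infty)}t\le B_*(t)\le t$ is contained in $[\tfrac12 e^{-B(+\infty)}R,\,R]$. On that set $t\asymp R$, so the integral is bounded by $CR^{\theta+1}$ for \emph{every} value of $\theta$, positive or negative, and your exponent identity then applies uniformly: $\ep\le CR^{E}$ for all $R\in(2e^{B(+\infty)},T_\ep)$, which gives the corollary in full. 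The correct derivation therefore quotes (or re-runs) the localized form of the bound from the proof rather than the weakened version with $\int_0^{T_\ep}$ that appears in the theorem's statement; with that single adjustment your argument goes through without any case distinction.
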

\begin{remark}
We give several remarks on Corollary \ref{intro:cor}.
\begin{itemize}
\item[(i)]
In the simplest case
$a \equiv 1$ and $b \equiv 0$,
we note that 
$\gamma(N,0;p)$ and $p_{\rm HWY}(N,0)$
coincide with $\gamma_{\rm S}(N,p)$ 
and $p_{\rm S}(N)$, respectively.
\item[(ii)]
The function $\gamma(N,\alpha;p)$
and the exponent $p_{\rm HWY}(N,\alpha)$
in Corollary \ref{intro:cor} 
appear in the analysis of blowup phenomena for
semilinear generalized Tricomi equations.
Our result slightly extends the subcritical blowup part of He--Witt-Yin
\cite{HeWittYin2017CVPDE, HeWittYin2017JDE, HeWittYin2020, HeWittYin2021}
and Tu--Lin \cite{TuLin2019arXiv}
to include general propagation speed $a(t)$ behaving like $(1+t)^{\alpha}$ and the scattering damping term.
\item[(iii)]
The case $a\equiv 1$ and 
$b(t)=(1+t)^{-\beta}$ is dealt with 
in Lai--Takamura \cite{LaiTakamura2018}.
The case $a\equiv 1$ and an integrable damping $b$ is treated in 
Wakasa--Yordanov \cite{WakasaYordanov2019} including the critical case $p=p_{\rm S}(N)$. 
Although the assumption of the damping $b$ is slightly stronger than that of \cite{WakasaYordanov2019}, 
our contribution provides 
the case of the time-dependent coefficient $a$ of the Laplacian.
\item[(iv)]
Under the assumption in Corollary \ref{intro:cor}, 
a direct computation shows that 
$\sigma= \limsup_{t\to \infty} (\log A(t)/ \log t )=1+\alpha$. 
Corollary \ref{intro:cor} improves 
the blowup condition \eqref{intro:TsutayaWakasugi}
by Tsutaya--Wakasugi \cite{TsutayaWakasugi_toappear}.
Note that in the typical case $\sigma=1$ (or equivalently, $\alpha = 0$), the condition \eqref{intro:TsutayaWakasugi} when
$M_g >0$
gives the Kato exponent $p_{\rm K}(N)=\frac{N+1}{N-1}$,
and our result provides the Strauss exponent
$p_{\rm S}(N)$.

\item[(v)]
In the critical case $\gamma(N,\alpha;p)=0$ (or equivalently $p=p_{\rm HWY}(N,\alpha)$),
it can be expected that 
the lifespan $T_\ep$ of the strong solution $u$ {\color{blue}has}
%satisfies
the upper bound $T_\ep \leq \exp(C\ep^{-p(p-1)})$ for sufficiently small $\ep>0$.
Our approach will still work in the critical case,
however, it is easy to imagine that this requires an involved technique with long computations.
To concentrate on explaining
the new idea of our approach,
we do not treat the critical case in this paper.
We discuss such a critical phenomenon in a forthcoming paper. 
\end{itemize}
\end{remark}

Here we give a brief explanation of our viewpoint 
with conservative quantities for the corresponding linear equation, 
which is closely related to 
the framework in Ikeda--Sobajima--Wakasa \cite{ISW2019JDE}. 
To explain the idea, 
we rewrite the corresponding linear equation 
of the problem \eqref{intro:Pab} as follows:
\begin{equation}
\label{intro:Pablinear}
\begin{cases}
\pa_t(e^{B(t)}\pa_t v)-e^{B(t)}a(t)^2\Delta v=0
&\text{in}\ \R^N\times (0,\infty),
\\
(v,\pa_tv)(0)=(v_0,v_1)
&\text{in}\ \R^N,
\end{cases}
\end{equation}
where $B(t)=\int_0^tb(r)\,dr$. 
Then one can observe that for each (typical) solution $\Phi$ 
of the corresponding linear equation \eqref{intro:Pablinear}, 
the following formal conservation law is valid: 
\begin{align*}
\frac{d}{dt}\left(
e^{B(t)}
\int_{\R^N} \pa_t v\Phi- v\pa_t\Phi\,dx
\right)
&=
\int_{\R^N} \pa_t(e^{B(t)}\pa_tv)\Phi- v\pa_t(e^{B(t)}\pa_t\Phi)\,dx
\\
&=
e^{B(t)}a(t)^2\int_{\R^N} \Delta v\Phi- v\Delta \Phi\,dx
\\
&=0,
\end{align*}
namely, 
the 
%above quantity 
the integral of $e^{B(t)}(\pa_t v\Phi- v\pa_t\Phi)$
is independent of $t$. 
This consideration suggests that various conservative quantities
for the corresponding linear solution of
\eqref{intro:Pablinear}
can be provided via prescribed solutions. 
The simplest solution 
of \eqref{intro:Pablinear}
is of course the constant steady states $\Phi\equiv 1$ 
which provides the conservative quantity
\[
e^{B(t)}\int_{\R^N} \pa_t v\,dx= \int_{\R^N}v_1\,dx, \quad t>0.
\]
On the one hand, the solution via separation of variables 
$\Phi(x,t)=m(t)\phi(x)$ has a parameter $\lambda\in\R$ satisfying 
\[
\lambda=\frac{m''(t)+b(t)m'(t)}{a(t)^2m(t)}=\frac{\Delta \phi(x)}{\phi(x)}.
\]
Then it turns out that the choice 
$\phi(x)=\int_{S^{N-1}}e^{x\cdot \omega}\,dS(\omega)$
in Yordanov--Zhang \cite{YordanovZhang2006}
is reasonable because this function $\phi$ 
satisfies $\Delta\phi=\phi>0$; 
note that the solution $\Phi(x,t)=e^{-t}\phi(x)$ of the linear classical wave equation 
is used in \cite{YordanovZhang2006}. 
Besides, the function $m$ 
is required to be a (positive) decreasing solution of 
\begin{equation}\label{intro:eq:m}
m''(t)+b(t)m'(t)=a(t)^2m(t), \quad t>0.
\end{equation}
To analyse the asymptotic behavior of solutions to \eqref{intro:eq:m}, 
we employ the framework of 
the Liouville--Green approximation 
(or WKB approximation).
Once we find such a solution $\Phi(x,t)=m(t)\phi(x)$, 
we can proceed the test function method with respect to 
the conservative quantity 
\begin{equation}\label{intro:conservative:lightcone}
e^{B(t)}
\int_{\R^N} \big(m(t)\pa_tv(t)- m'(t)v(t)\big)\phi\,dx
=
\int_{\R^N} \big(m(0)v_1- m'(0)v_0\big)\phi\,dx
\end{equation}
for the corresponding linear equation \eqref{intro:Pablinear}. It is worth noting that 
the solution $\Phi(x,t)=m(t)\phi(x)$ 
behaves like an exponentially decaying function with respect to $A(t)$ 
on any compact set in $\R^N$,
and polynomially decaying function 
in $A(t)$ on the neighbourhood of 
$\mathcal{C}(t)=\{x\in \R^N\;;\;|x|=A(t)\}$.
This means that the quantity \eqref{intro:conservative:lightcone} 
describes a concentration phenomenon 
of the wave near the light-cone $\mathcal{C}(t)$. 
The Strauss-type blowup phenomenon can be observed 
%via such a phenomenon 
by the use of the quantity \eqref{intro:conservative:lightcone}.

The present paper is organized as follows: 
in Section \ref{sec:prelim}
we state the general theorem 
for second-order ordinary differential equations
of the form $y''=Vy$
to clarify the large time behavior of their solutions, 
which is called Liouville--Green approximation. 
In Section \ref{sec:TFM},
we prove Theorem \ref{thm:main} via a modified version of 
test function methods inspired by Ikeda--Sobajima--Wakasa \cite{ISW2019JDE}. 
The crucial point is that in the construction of solutions to 
the corresponding linear problem via separation of variables, 
we employ the Liouville--Green approximation for the time-variable $t$.
	
\section{Preliminaries}\label{sec:prelim}

We collect 
the fundamental 
properties of solutions to 
the following second-order 
ordinary differential equation:
\begin{equation}\label{eq:2nd-ODE}
y''(t)=V(t)y(t), \quad t>0
\end{equation}
with some well-behaved potential $V(t)$.
Clearly, 
the set of all
solutions of the equation \eqref{eq:2nd-ODE} 
is a
two-dimensional subspace of $C^2(\R_+)$. 
In a special case, 
the asymptotic profiles of its fundamental solutions
are well--known. 
The following lemma is a consequence of the Liouville--Green approximation. 
%(or WKB approximation). 
For details, see Olver \cite[Chapter 6]{OlverBook}
(and also Metafune--Sobajima \cite{MSprocEQUADIFF}).

\begin{lemma}\label{lem:ODEprelim}
Let $V\in C([0,\infty))$. Assume that 
there exist functions $(\varphi,V_*)\in C^2([0,\infty))\times C([0,\infty))$ such that 
$V(t)=\varphi(t)^2+V_*(t)$ with 
\begin{itemize}
\item[\bf (i)] $\varphi(t)>0$ and $\varphi\notin L^1((0,\infty))$,
\item[\bf (ii)] $W_{\varphi,V_*}:=
-\varphi^{-\frac{1}{2}}
(\varphi^{-\frac{1}{2}})''+V_*\varphi^{-1}\in L^1((0,\infty))$. 
\end{itemize}
Then there exists a fundamental system 
$(\psi_+,\psi_-)$ of the equation 
\eqref{eq:2nd-ODE} such that 
the following asymptotics hold:
\begin{gather*}
\lim_{t\to \infty}
\left(
\frac{\varphi(t)^{\frac{1}{2}}\psi_{+}(t)}{e^{\Phi(t)}}
\right)
=1, 
\quad 
\lim_{t\to \infty}
\left(
\frac{[\varphi^{\frac{1}{2}}\psi_{+}]'(t)}{\varphi(t)e^{\Phi(t)}}
\right)=1, 
\\
\lim_{t\to \infty}
\left(
\frac{\varphi(t)^{\frac{1}{2}}\psi_{-}(t)}{e^{- \Phi(t)}}
\right)
=1,
\quad 
\lim_{t\to \infty}
\left(
\frac{[\varphi^{\frac{1}{2}}\psi_{-}]'(t)}{\varphi(t)e^{-\Phi(t)}}
\right)=-1,
\end{gather*}
where $\Phi(t)=\int_0^t\varphi(r)\,dr$.
\end{lemma}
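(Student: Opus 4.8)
The plan is to reduce \eqref{eq:2nd-ODE} to a constant-coefficient equation with an $L^1$ perturbation by the Liouville--Green change of variables, to solve the resulting model equation by a Volterra integral-equation argument, and then to transform back. First I would introduce the new independent variable $\xi=\Phi(t)=\int_0^t\varphi(r)\,dr$ and the new unknown $w=\varphi^{1/2}y$. By {\bf (i)} the map $t\mapsto\xi$ is a $C^2$-diffeomorphism of $[0,\infty)$ onto $[0,\infty)$, since $\varphi>0$ and $\varphi\notin L^1$ force $\Phi(t)\to\infty$. Writing $\dot{\ }=d/d\xi$ and using $dt=\varphi^{-1}d\xi$, a direct computation turns $y''=Vy=(\varphi^2+V_*)y$ into
\[
\ddot w=(1+R)w,\qquad R=V_*\varphi^{-2}-\tfrac{3}{4}\varphi^{-4}(\varphi')^2+\tfrac{1}{2}\varphi^{-3}\varphi'',
\]
where the first-order terms cancel exactly because of the factor $\varphi^{1/2}$ in $w$.

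The key bookkeeping step is the identity $R\,\varphi=W_{\varphi,V_*}$, which follows from $(\varphi^{-1/2})''=\frac{3}{4}\varphi^{-5/2}(\varphi')^2-\frac{1}{2}\varphi^{-3/2}\varphi''$. Consequently, under the change of variables $d\xi=\varphi\,dt$,
\[
\int_0^\infty|R(\xi)|\,d\xi=\int_0^\infty|W_{\varphi,V_*}(t)|\,dt<\infty
\]
by {\bf (ii)}; thus $R\in L^1((0,\infty))$ in the variable $\xi$. It then remains to prove the model statement: if $\ddot w=(1+R)w$ on $[0,\infty)$ with $R\in L^1$, then there is a fundamental system $(w_+,w_-)$ with $e^{-\xi}w_+\to1$, $e^{-\xi}\dot w_+\to1$, $e^{\xi}w_-\to1$ and $e^{\xi}\dot w_-\to-1$. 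For the recessive solution I would set up the Volterra equation
\[
w_-(\xi)=e^{-\xi}+\int_\xi^\infty\sinh(s-\xi)R(s)w_-(s)\,ds,
\]
and show by successive approximation that for $\xi$ large (so that $\int_\xi^\infty|R|<1$) it is a contraction. Putting $h=e^{\xi}w_-$ turns the kernel into $\frac{1}{2}(1-e^{2(\xi-s)})$, which is bounded by $\frac12$ on $\{s\ge\xi\}$; hence $h$ is bounded and $h(\xi)=1+O(\int_\xi^\infty|R|)\to1$, giving the first pair of asymptotics, while differentiating the integral equation yields $e^{\xi}\dot w_-\to-1$.

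For the dominant solution the genuine difficulty is \emph{normalization}: integrating the same Green kernel from a finite base point produces a solution $\sim c\,e^{\xi}$ with an a priori unknown constant $c$. I would instead obtain $w_+$ by reduction of order, $w_+=2w_-\int_{\xi_1}^\xi w_-(s)^{-2}\,ds$ with $\xi_1$ chosen so large that $w_->0$ on $[\xi_1,\infty)$. Since $w_-^{-2}=e^{2\xi}(1+o(1))$, the integral equals $\frac{1}{2}e^{2\xi}(1+o(1))$, and one checks $e^{-\xi}w_+\to1$ and $e^{-\xi}\dot w_+\to1$; the Wronskian of $(w_+,w_-)$ is a nonzero constant, so the pair is a fundamental system. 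Finally I would transform back: setting $\psi_\pm=\varphi^{-1/2}w_\pm$ gives $\varphi^{1/2}\psi_\pm=w_\pm$ and, since $[\varphi^{1/2}\psi_\pm]'=\dot w_\pm\cdot\varphi$, the four displayed limits follow at once from those for $w_\pm$ together with $e^{\pm\Phi}=e^{\pm\xi}$.

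The main obstacle is not the transformation — that is a routine if delicate computation anchored by the identity $R\varphi=W_{\varphi,V_*}$, which is precisely what converts hypothesis {\bf (ii)} into integrability of the perturbation — but the sharp normalization of the dominant solution, i.e. proving that the limit is exactly $1$ rather than merely establishing the rate $e^{\xi}$. The reduction-of-order route handles this cleanly, at the cost of a careful asymptotic evaluation of $\int_{\xi_1}^\xi w_-^{-2}$; alternatively one may route the growing and decaying modes to different endpoints in a mixed Volterra equation, which is the formulation underlying Olver \cite[Chapter 6]{OlverBook}.
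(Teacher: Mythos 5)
Your proposal is correct and follows essentially the route the paper itself indicates (Remark 2.1 and the citation of Olver \cite{OlverBook}): the Liouville transform $w=\varphi^{1/2}y$, $\xi=\Phi(t)$ reduces \eqref{eq:2nd-ODE} to $\ddot w=(1+R)w$ with $\int|R|\,d\xi=\int|W_{\varphi,V_*}|\,dt<\infty$, after which the recessive solution is obtained from the Volterra equation based at $\xi=\infty$ and the dominant one by reduction of order. Your identity $R\varphi=W_{\varphi,V_*}$ and the normalization of $\psi_+$ via $w_+=2w_-\int w_-^{-2}$ are both correct, so the argument is complete.
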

\begin{remark}
{(i)} In the proof of 
Lemma \ref{lem:ODEprelim}, 
the Liouville transform 
$y\mapsto \widetilde{y}$ given as 
\[
y(t)=\varphi(t)^{-\frac{1}{2}}\widetilde{y}(\Phi(t))
\]
which translates the equation \eqref{eq:2nd-ODE} into  $\widetilde{y}''=\widetilde{y}+\widetilde{W}$ with a suitable negligible correction $\widetilde{W}$.
This enables us to find the desired asymptotic behavior for $\widetilde{y}$ and then the one for $y$. 
This consideration will be also used in Appendix. 

{(ii)}\ According to 
Lemma \ref{lem:ODEprelim}, 
there exists a pair $(\psi_+,\psi_-)$ of linearly independent solutions 
to \eqref{eq:2nd-ODE} satisfying the asymptotic behavior
\[
\psi_+(t)\sim \varphi(t)^{-\frac{1}{2}}e^{\Phi(t)}, 
\quad
\psi_-(t)\sim \varphi(t)^{-\frac{1}{2}}e^{-\Phi(t)} 
\]
as $t\to \infty$.
In particular, our strategy requires 
the above behavior of $\psi_-$. 
\end{remark}

\section{Test function methods with conservative quantities}
\label{sec:TFM}

In this section we employ the test function method 
with certain 
conservative quantities via special solutions of the corresponding linear equation
\begin{equation}\label{eq:linear}
\pa_t\Phi-a(t)^2\Delta \Phi +b(t)\pa_t\Phi =0\quad\text{in}\ \R^N\times (0,\infty).
\end{equation}
As explained in Introduction, we frequently use 
the description %form 
\[
\pa_t(e^{B(t)}\pa_t \Phi)=e^{B(t)}a(t)^2\Delta \Phi
\]
with $B(t)=\int_0^tb(r)\,dr$
and the corresponding conservative quantity 
for the linear problem \eqref{intro:Pab-lin}:
\[
Q_{\Phi}(u;t)=e^{B(t)}
\int_{\R^N} \pa_t u\Phi- u\pa_t\Phi\,dx, \quad t\in [0,T_\ep).
\]

The following representation is useful 
in the framework of test function methods.
\begin{lemma}\label{lem:inv}
Let $u$ be the strong solution of \eqref{intro:Pab} in $(0,T_\ep)$
with $(f,g)\in H^2(\R^N)\times H^1(\R^N)$
satisfying \eqref{ass:ini}. 
If a function $\Phi\in C^2(\R^N\times [0,\infty))$ satisfies 
\eqref{eq:linear}, then 
for every $\psi\in C_0^2([0,T))$, 
\begin{align}
\nonumber 
&
-
\frac{d}{dt}\left(
\psi
Q_{\Phi}(u;t)
%e^{B(t)}\int_{\R^N}\pa_tu\Phi-u\pa_t\Phi\,dx
-
\psi'
e^{B(t)}\int_{\R^N}u\Phi\,dx\right)
+\psi e^{B(t)}\int_{\R^N}|u|^p\Phi\,dx
\\
&=
\label{eq:lem:inv}
2\psi' e^{B(t)}\int_{\R^N}u\pa_t\Phi\,dx
+(\psi' e^{B(t)})' \int_{\R^N}u\Phi\,dx.
\end{align}
\end{lemma}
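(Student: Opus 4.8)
The plan is to prove \eqref{eq:lem:inv} by differentiating the quantity $Q_\Phi(u;t)$ directly and then reorganizing the result with the weight $\psi$. First I would recast both equations in divergence form: the strong solution satisfies $\pa_t(e^{B(t)}\pa_tu)-e^{B(t)}a(t)^2\Delta u=e^{B(t)}|u|^p$, while $\Phi$ solves $\pa_t(e^{B(t)}\pa_t\Phi)=e^{B(t)}a(t)^2\Delta\Phi$ by \eqref{eq:linear}. Differentiating $Q_\Phi(u;t)=\int_{\R^N}\big(e^{B(t)}\pa_tu\,\Phi-u\,e^{B(t)}\pa_t\Phi\big)\,dx$ in $t$, the two cross terms $e^{B(t)}\pa_tu\,\pa_t\Phi$ cancel, and inserting the two divergence-form equations leaves $e^{B(t)}a(t)^2\int_{\R^N}(\Delta u\,\Phi-u\,\Delta\Phi)\,dx+e^{B(t)}\int_{\R^N}|u|^p\Phi\,dx$. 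By Green's identity the first integral vanishes, so the central computation is
\[
\frac{d}{dt}Q_\Phi(u;t)=e^{B(t)}\int_{\R^N}|u|^p\Phi\,dx.
\]

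Next I would expand the left-hand side of \eqref{eq:lem:inv} by the product rule. Writing $I(t)=e^{B(t)}\int_{\R^N}u\Phi\,dx$, the term $-\frac{d}{dt}(\psi Q_\Phi)$ produces $-\psi'Q_\Phi-\psi\,\frac{d}{dt}Q_\Phi$, and the factor $\psi\,\frac{d}{dt}Q_\Phi$ equals $\psi e^{B(t)}\int_{\R^N}|u|^p\Phi\,dx$ by the identity just obtained; this is exactly cancelled by the nonlinear term $+\psi e^{B(t)}\int_{\R^N}|u|^p\Phi\,dx$ on the left-hand side. What remains is $-\psi'Q_\Phi+\psi''I+\psi'I'$, so it suffices to compute $I'(t)$ and to identify $\psi'(I'-Q_\Phi)+\psi''I$ with the right-hand side of \eqref{eq:lem:inv}.

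For the final step I would differentiate $I(t)$, using $B'(t)=b(t)$, to get $I'(t)=e^{B(t)}\int_{\R^N}(\pa_tu\,\Phi+u\,\pa_t\Phi)\,dx+b(t)I(t)$. Comparing with $Q_\Phi(u;t)=e^{B(t)}\int_{\R^N}(\pa_tu\,\Phi-u\,\pa_t\Phi)\,dx$ gives $I'-Q_\Phi=2e^{B(t)}\int_{\R^N}u\,\pa_t\Phi\,dx+b(t)I(t)$, whence the remaining terms equal $2\psi'e^{B(t)}\int_{\R^N}u\,\pa_t\Phi\,dx+(\psi''+b\psi')I(t)$. Since $(\psi'e^{B(t)})'=e^{B(t)}(\psi''+b\psi')$, the coefficient of $\int_{\R^N}u\Phi\,dx$ is precisely $(\psi'e^{B(t)})'$, which matches \eqref{eq:lem:inv} exactly.

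The computation itself is elementary; the step requiring care is the analytic justification of the two manipulations, namely differentiation under the integral sign and Green's identity $\int_{\R^N}(\Delta u\,\Phi-u\,\Delta\Phi)\,dx=0$. These are legitimate because the finite speed of propagation \eqref{intro:FSP} confines $u(t)$, and hence all integrands, to the compact set $B(0,r_0+A(t))$ for each fixed $t$, on which the $C^2$ function $\Phi$ is bounded; combined with the regularity $u\in C([0,T_\ep);H^2(\R^N))\cap C^1([0,T_\ep);H^1(\R^N))\cap C^2([0,T_\ep);L^2(\R^N))$ from Definition \ref{def:sol}, approximation of $u(t)$ by $C_0^\infty$ functions validates both the integration by parts and the passage of $\frac{d}{dt}$ inside the spatial integral.
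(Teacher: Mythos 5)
Your proposal is correct and is essentially the computation in the paper: both proofs rest on the divergence forms of the two equations, Green's identity on the compactly supported solution, and product-rule bookkeeping with the weight $\psi$. The only difference is organizational — the paper differentiates the single quantity $e^{B(t)}\int(\pa_tu\,(\psi\Phi)-u\,\pa_t(\psi\Phi))\,dx$ treating $\psi\Phi$ as one test function, whereas you first isolate $\tfrac{d}{dt}Q_\Phi=e^{B(t)}\int|u|^p\Phi\,dx$ and then track the $\psi$-terms separately, which yields the same identity.
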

\begin{proof}
Since the solution $u$ satisfies ${\rm supp}\,u(t)\subset {\color{blue}B}(0,r_0+A(t))$
for $t\in (0,T_\ep)$, 
we can use $\psi \Phi$ as the test function.  
More precisely we use a smooth cut-off function $\widetilde{\psi}$ 
satisfying 
$\widetilde{\psi}\equiv 1$ on $\{(x,t)\;;\;|x|\leq r_0+A(t)\}$
and
$\widetilde{\psi}\equiv 0$ on $\{(x,t)\;;\;|x|\geq 1+r_0+A(t)\}$.  
A direct calculation with integration by parts shows that
\begin{align*}
&\frac{d}{dt}
\left(e^{B(t)}\int_{\R^N}\pa_tu(\psi\Phi)-u\pa_t(\psi\Phi)\,dx\right)
\\
&=
\int_{\R^N}
\pa_t(e^{B(t)}\pa_tu)\psi\Phi
-u\pa_t\big(e^{B(t)}\pa_t(\psi\Phi)\big)\,dx
\\
&=
\psi e^{B(t)}
\int_{\R^N}|u|^p\Phi\,dx
+
\int_{\R^N}u
\Big(e^{B(t)} \psi a(t)^2\Delta \Phi-\pa_t\big(e^{B(t)}\pa_t(\psi\Phi)\big)\Big)\,dx.
\end{align*}
By \eqref{eq:linear}, we have
\begin{align*}
\pa_t(e^{B(t)}\pa_t(\psi\Phi))
%&=
%\pa_t(e^{B(t)}\psi'\Phi)+\pa_t(e^{B(t)}\psi\pa_t\Phi)
%\\
&=
\pa_t(e^{B(t)}\psi')\Phi
+2\psi' e^{B(t)}\pa_t\Phi
+\psi \pa_t(e^{B(t)}\pa_t\Phi)
\\
&=
\pa_t(e^{B(t)}\psi')\Phi
+2\psi' e^{B(t)}\pa_t\Phi
+\psi e^{B(t)}a(t)^2\Delta \Phi.
\end{align*}
Combining these equalities, we deduce the desired equality. 
\end{proof}
The above lemma suggests the choice of cut-off functions for the variable $t$ 
in the following way.
Fix a function $\eta\in C^\infty(\R)$ satisfying 
$\mathbbm{1}_{(-\infty,\frac{1}{2})}\leq \eta\leq \mathbbm{1}_{(-\infty,1)}$, 
where $\mathbbm{1}_{K}$ is the indicator function on a set $K$.
Then for the parameter $R>0$, define 
\begin{equation}\label{eq:psi_R}
\psi_R(t)=\eta\left(\frac{B_*(t)}{B_*(R)}\right)^{\frac{2p}{p-1}}, \quad B_*(t)=\int_{0}^te^{-B(r)}\,dr.
\end{equation}
Then by a direct calculation, we have
\begin{lemma}\label{lem:cutoff-estimate}
Assume that $b\in L^1((0,\infty))$. Let $\psi_R$ be as in \eqref{eq:psi_R}. 
Then one has $\psi_R\in C_0^2([0,R))$.
Moreover, there exists a positive constant $C$ independent of $R$ such that 
for every $R>0$ and $t\geq 0$, 
\[
|\psi_R'(t)e^{B(t)}|
\leq  
\frac{C}{R}\psi_R(t)^{\frac{1}{p}},
\quad 
|
(\psi_R'(t)e^{B(t)})'|
\leq  
\frac{C}{R^{2}}\psi_R(t)^{\frac{1}{p}}.
\]
\end{lemma}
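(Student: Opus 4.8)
The plan is to exploit the precise choice of the exponent $\frac{2p}{p-1}$ in \eqref{eq:psi_R}, which is tuned so that the lowest power of the cut-off surviving differentiation matches $\psi_R^{1/p}$ exactly. Write $\theta=\frac{2p}{p-1}$ and $\zeta(t)=\eta\!\big(B_*(t)/B_*(R)\big)$, so that $\psi_R=\zeta^\theta$. First I would record the elementary facts coming from $0\le b\in L^1((0,\infty))$: the primitive $B$ is nondecreasing and bounded, whence $e^{-\|b\|_{L^1}}\le e^{-B(t)}\le 1$ for all $t\ge0$. Consequently $B_*'=e^{-B}$ is positive and $C^1$, so $B_*\in C^2([0,\infty))$ is strictly increasing with $B_*(R)\ge e^{-\|b\|_{L^1}}R$, and $B_*$ maps $[0,\infty)$ onto $[0,\infty)$.

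For the support and regularity, since $\eta\le \mathbbm{1}_{(-\infty,1)}$ we have $\zeta(t)=0$ whenever $B_*(t)\ge B_*(R)$, i.e.\ $t\ge R$, so $\supp\psi_R\subset[0,R)$. As $\eta$ is smooth and $B_*\in C^2$, we get $\zeta\in C^2$ with $0\le\zeta\le1$; because $\theta>2$, the map $s\mapsto s^\theta$ is $C^2$ on $[0,\infty)$ with second derivative $\theta(\theta-1)s^{\theta-2}$ vanishing at $s=0$, so $\psi_R=\zeta^\theta$ is genuinely $C^2$ across the edge $\{\zeta=0\}$ of its support. This yields $\psi_R\in C_0^2([0,R))$.

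The key algebraic identity is that $\theta-2=\frac{2}{p-1}=\frac{\theta}{p}$, so $\psi_R^{1/p}=\zeta^{\theta-2}$, while $\theta-1=\frac{\theta}{p}+1$. Differentiating and using $\zeta'=\eta'\!\big(B_*/B_*(R)\big)\,e^{-B}/B_*(R)$, the factor $e^{-B}$ cancels against $e^{B}$:
\[
\psi_R'e^{B}=\frac{\theta}{B_*(R)}\,\eta'\!\big(\tfrac{B_*}{B_*(R)}\big)\,\zeta^{\theta-1}.
\]
Since $0\le\zeta\le1$, one has $\zeta^{\theta-1}\le\zeta^{\theta-2}=\psi_R^{1/p}$, and together with $B_*(R)\ge e^{-\|b\|_{L^1}}R$ and $\|\eta'\|_\infty<\infty$ this gives the first bound. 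Differentiating once more produces a further factor $e^{-B}/B_*(R)\le 1/B_*(R)$ together with the two terms $(\theta-1)\,\eta'(\cdot)^2\,\zeta^{\theta-2}$ and $\eta''(\cdot)\,\zeta^{\theta-1}$; here the lowest power $\zeta^{\theta-2}$ equals $\psi_R^{1/p}$ exactly, while $\zeta^{\theta-1}\le\psi_R^{1/p}$, so boundedness of $\eta',\eta''$ and $B_*(R)^{-2}\le e^{2\|b\|_{L^1}}R^{-2}$ yield the second bound.

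The computations are otherwise routine; the only points demanding care are the $C^2$ regularity at the boundary of the support, which is exactly why the exponent is taken strictly larger than $2$, and the bookkeeping of the powers of $\zeta$, which closes precisely because $\frac{2p}{p-1}-2=\frac{2p}{p-1}\cdot\frac1p$. In particular $\theta=\frac{2p}{p-1}$ is the smallest exponent for which both inequalities hold simultaneously: a strictly smaller power would make the $\zeta^{\theta-2}$ term in $(\psi_R'e^{B})'$ fail to be dominated by $\psi_R^{1/p}$.
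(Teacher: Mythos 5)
Your proof is correct and follows essentially the same route as the paper's: the paper sets $\zeta=\eta^{2p'}$ and computes $\psi_R'e^{B}=\zeta'(B_*/B_*(R))/B_*(R)$ and $(\psi_R'e^{B})'=\zeta''(B_*/B_*(R))e^{-B}/B_*(R)^2$, then concludes from $e^{-B(+\infty)}t\le B_*(t)\le t$, which is exactly your cancellation of $e^{-B}$ against $e^{B}$ plus the exponent bookkeeping $\tfrac{2p}{p-1}-2=\tfrac{2}{p-1}=\tfrac{1}{p}\cdot\tfrac{2p}{p-1}$. Your write-up is just a more explicit version (including the $C^2$ regularity at the edge of the support, which the paper leaves implicit), so no substantive difference.
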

\begin{proof}
Set $\zeta=\eta^{2p'}$ for simplicity. 
By the definition of $\psi_R$, we can compute
\begin{align*}
\psi_R'(t)e^{B(t)}
&=\zeta'\left(\frac{B_*(t)}{B_*(R)}\right)\frac{1}{B_*(R)},
\\
(\psi_R'(t)e^{B(t)})'
&=\zeta''\left(\frac{B_*(t)}{B_*(R)}\right)\frac{e^{-B(t)}}{B_*(R)^2}.
\end{align*}
Noting that $e^{-B(+\infty)}t\leq B_*(t)\leq t$, we can find the desired estimates.
\end{proof}

\begin{proposition}\label{prop:TFM1}
Assume that $a$ and $b$ satisfy {\bf (A1)}. Then 
there exist positive constants $\ep_0$ and $C$ independent of $\ep$ such that 
for every $\ep\in (0,\ep_0)$ and $R\in (0,T_\ep)$, 
\[
\ep M_g+\int_0^R \psi_{R}\int_{\R^N}|u|^p\,dx\,dt
\leq CR^{-\frac{p+1}{p-1}}\big(r_0+A(R)\big)^{N}.
% \leq CT_\ep^{-\frac{p+1}{p-1}}\big(r_0+A(T_\ep)\big)^{N}.
\]
%which provides that if $M_g> 0$, then  one has 
%$\ep \leq CT_\ep^{-\frac{p+1}{p-1}}\big(r_0+A(T_\ep)\big)^{N}$.
\end{proposition}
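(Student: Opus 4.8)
The plan is to apply Lemma~\ref{lem:inv} with the simplest solution of the linear equation \eqref{eq:linear}, namely the constant steady state $\Phi\equiv 1$, together with the cut-off $\psi=\psi_R$ from \eqref{eq:psi_R}. For this $\Phi$ one has $\pa_t\Phi=0$ and $\Delta\Phi=0$, so the conservative quantity collapses to $Q_\Phi(u;t)=e^{B(t)}\int_{\R^N}\pa_t u\,dx$, the first term on the right-hand side of \eqref{eq:lem:inv} disappears, and the identity becomes
\[
-\frac{d}{dt}\left(\psi_R Q_\Phi(u;t)-\psi_R' e^{B(t)}\int_{\R^N}u\,dx\right)+\psi_R e^{B(t)}\int_{\R^N}|u|^p\,dx=(\psi_R' e^{B(t)})'\int_{\R^N}u\,dx.
\]

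First I would integrate this identity over $[0,R]$. Since $\psi_R\in C_0^2([0,R))$ by Lemma~\ref{lem:cutoff-estimate}, both $\psi_R$ and $\psi_R'$ vanish at $t=R$, so the total-derivative term contributes only its value at $t=0$; using $B(0)=0$, $\psi_R(0)=1$ and $\psi_R'(0)=0$, this boundary value equals $\int_{\R^N}\pa_tu(0)\,dx=\ep M_g$. Invoking $b\geq 0$, hence $e^{B(t)}\geq 1$, on the nonlinear term then yields
\[
\ep M_g+\int_0^R\psi_R\int_{\R^N}|u|^p\,dx\,dt\leq\int_0^R(\psi_R' e^{B(t)})'\int_{\R^N}u\,dx\,dt.
\]

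Next I would estimate the right-hand side. By finite speed of propagation \eqref{intro:FSP}, $u(t)$ is supported in $B(0,r_0+A(t))$, so H\"older's inequality in $x$ gives $|\int_{\R^N}u\,dx|\leq C(r_0+A(t))^{N/p'}(\int_{\R^N}|u|^p\,dx)^{1/p}$. Inserting the bound $|(\psi_R'e^{B(t)})'|\leq CR^{-2}\psi_R^{1/p}$ from Lemma~\ref{lem:cutoff-estimate} and applying H\"older's inequality in $t$ (with exponents $p$ and $p'$) peels off the factor $I^{1/p}$, where $I:=\int_0^R\psi_R\int_{\R^N}|u|^p\,dx\,dt$, while the remaining factor is controlled by $(\int_0^R(r_0+A(t))^N\,dt)^{1/p'}\leq R^{1/p'}(r_0+A(R))^{N/p'}$ since $A$ is increasing. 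Collecting the powers of $R$ (note $-2+1/p'=-1-1/p$) produces an inequality of the form $\ep M_g+I\leq CR^{-1-1/p}(r_0+A(R))^{N/p'}I^{1/p}$.

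The decisive step, which is the heart of the test-function method, is to absorb the nonlinear term: by Young's inequality one splits the right-hand side as $\tfrac12 I+C(R^{-1-1/p}(r_0+A(R))^{N/p'})^{p'}$. A direct computation of the exponents gives $(-1-1/p)p'=-(p+1)/(p-1)$ and $(N/p')p'=N$, which is exactly the scaling claimed in the statement; absorbing $\tfrac12 I$ into the left-hand side then finishes the proof. I expect no genuine obstacle here: the smallness of $\ep$ plays no role in the estimate itself (one may take $\ep_0=1$, which is needed only to guarantee $T_\ep>0$), and the only delicate point is the correct bookkeeping of the two H\"older steps followed by the final application of Young's inequality.
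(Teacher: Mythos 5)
Your proposal is correct and follows essentially the same route as the paper: the choice $\Phi\equiv 1$ in Lemma~\ref{lem:inv} with the cut-off $\psi_R$, H\"older in $x$ via finite speed of propagation, H\"older in $t$ using the bound $|(\psi_R'e^{B})'|\leq CR^{-2}\psi_R^{1/p}$, and absorption of the nonlinear term by Young's inequality. The only (immaterial) difference is that the paper applies Young's inequality before evaluating $\int_0^R(r_0+A(t))^N\,dt$, whereas you evaluate that integral first; the exponent bookkeeping $-2p'+1=-\tfrac{p+1}{p-1}$ is identical.
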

\begin{proof}
We take $\Phi\equiv1$, that is, we focus our attention to 
the quantity
\[
Q_1(u;t)=e^{B(t)}\int_{\R^N}\pa_tu\,dx, \quad Q_1(u;0)=\ep M_g.
\]
Then the right-hand side of \eqref{eq:lem:inv} in Lemma \ref{lem:inv}
can be estimated as follows:
\begin{align*}
&2e^{B(t)}\psi_R'\int_{\R^N}u\pa_t\Phi\,dx
+\pa_t(e^{B(t)}\pa_t\psi_R) \int_{\R^N}u\Phi\,dx
\\
&\leq 
\frac{C}{R^2}\psi_{R}^{\frac{1}{p}}\int_{\R^N}|u|\,dx
\\
&\leq 
\frac{C}{R^2}
\left(\int_{{\rm supp}\,u(t)}\,dx\right)^{\frac{1}{p'}}
\left(\psi_{R}\int_{\R^N}|u|^p\,dx\right)^{\frac{1}{p}},
\end{align*}
where we have used the finite speed of propagation.
Integrating it over $(0,R)$ with Lemma \ref{lem:inv} 
and using H\"older's and Young's inequalities, we have
\begin{align*}
&
Q_1(u;0)+\int_0^R e^{B(t)}\psi_R\int_{\R^N}|u|^p\,dx\,dt
\\
&\leq 
\frac{C}{R^2}
\left(\int_0^R\int_{{\rm supp}\,u(t)}\,dx\,dt\right)^{\frac{1}{p'}}
\left(\int_0^{R}\psi_{R}\int_{\R^N}|u|^p\,dx\,dt\right)^{\frac{1}{p}}
\\
&\leq 
\frac{C^{p'}}{p'R^{2p'}}\int_0^R\int_{{\rm supp}\,u(t)}\,dx\,dt
	+
\frac{1}{p}\int_0^R \psi_{R}\int_{\R^N}|u|^p\,dx\,dt
\end{align*}
and therefore 
\begin{align*}
\ep M_g+\int_0^R \psi_R\int_{\R^N}|u|^p\,dx\,dt
\leq 
\frac{C^{p'}}{R^{2p'}}\int_0^R\int_{{\rm supp}\,u(t)}\,dx\,dt.
\end{align*}
Computing the right-hand side of the above estimate 
via the finite speed of propagation ${\rm supp}\,u(t)\subset \overline{B}(0,r_0+A(t))$, 
one can find the desired inequality. 
\end{proof}

\begin{proof}[Proof of the first estimate \eqref{eq:thm-eq1} in Theorem \ref{thm:main}]
We can assume that $T_\ep >2e^{B(+\infty)}$. 
In view of Proposition \ref{prop:TFM1}, it suffices to find a lower bound for 
\[
\int_0^R \psi_R\int_{\R^N}|u|^p\,dx\,dt
\]
with $R\in (2e^{B(+\infty)},T_\ep)$.
By the definition of $\psi_R$, we see that 
for $0\leq t\leq 1$, 
one has $\frac{B_*(t)}{B_*(R)}\leq e^{B(+\infty)}\frac{t}{R}\leq \frac{1}{2}$ 
which implies that $\psi_R(t)=1$ for $t\in [0,1]$. 
This yields
\[
\int_0^1 \int_{\R^N}|u|^p\,dx\,dt
\leq \int_0^R \psi_R\int_{\R^N}|u|^p\,dx\,dt.
\] 
Observe again that the solution $u$ is supported in 
$B(0,r_0+A(1))$ for $t\in[0,1]$. 
Multiplying the both sides of the equation in \eqref{intro:Pab} by $e^{B(t)}$
and integrating them over $\R^N$, we have 
\[
e^{B(t)}
\frac{d}{dt}
\int_{\R^N} u(t) \,dx
=
\ep M_g+\int_0^t e^{B(r)}\int_{\R^N}|u(r)|^p\,dx\,dr\geq 0, \quad t\in (0,1),
\]
which yields 
\begin{equation*}%\label{eq:Q_1-lower}
\int_{\R^N} u(t) \,dx\geq 
\ep M_f, \quad t\in [0,1].
\end{equation*}
Therefore using the H\"older inequality, we have
for every $t\in [0,1]$, 
\[
\ep M_f\leq \int_{\R^N} u(t) \,dx\leq 
C(r_0+A(1))^{\frac{N}{p'}}
%\omega_{N}^{\frac{1}{p'}} (r_0+A(1))^{\frac{N}{p'}}
\left(\int_{\R^N}|u|^p\,dx\right)^{\frac{1}{p}}
\]
for some positive constant $C$. Consequently, 
we deduce %for $R\in ({\color{blue}2e^{B(+\infty)}},T_\ep)$, 
\begin{align*}
\frac{\ep^p M_f^p}{C^p(r_0+A(1))^{N(p-1)}}
&\leq \int_0^1 \int_{\R^N}|u|^p\,dx\,dt.
%\leq 
%\int_0^R \psi_R\int_{\R^N}|u|^p\,dx\,dt.
\end{align*}
The proof of \eqref{eq:thm-eq1} is complete.
\end{proof}

Next we impose {\bf (A2)} and {\bf (A3)} on $a$ and $b$. 
Then we argue 
the conservative quantity for \eqref{eq:linear} via 
solutions with separation of variables. 
Namely, consider 
solutions of \eqref{eq:linear} of the form
\[
\Phi(x,t)=m(t)\phi(x), \quad (x,t)\in \R^N\times [0,\infty).
\]
As explained in Introduction, we choose 
\[
\phi(x)=\int_{S^{N-1}}e^{\omega\cdot x}\,dS(\omega)
\]
and $m\in C^2([0,\infty))$ satisfying 
the ordinary differential equation
\begin{equation}\label{eq:ode-for-sol}
m''(t)+b(t)m'(t)=a(t)^2m(t), \quad  t>0.
\end{equation}
The following lemma collects 
required properties of the solution $m$ of 
\eqref{eq:ode-for-sol} to 
proceed our test function method.
\begin{lemma}\label{lem:sol-of-ode}
Assume that $a$, $b$ satisfy {\bf (A1)--(A3)}. 
Then there exist a solution $m_*\in C^2([0,\infty))$ of \eqref{eq:ode-for-sol}
and a positive constant $\delta_*>0$ 
such that for every $t\geq 0$, 
\begin{gather*}
\delta_* a(t)^{-\frac{1}{2}}e^{-A(t)}
\leq 
m_*(t)\leq  
\delta_*^{-1}a(t)^{-\frac{1}{2}}e^{-A(t)},
\\
-
\delta_*^{-1}a(t)^{\frac{1}{2}}e^{-A(t)}
\leq 
m_*'(t)\leq  
-\delta_* a(t)^{\frac{1}{2}}e^{-A(t)}.
\end{gather*}
\end{lemma}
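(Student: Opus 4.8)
The plan is to remove the first-order term from \eqref{eq:ode-for-sol} by the standard substitution $m(t)=e^{-B(t)/2}y(t)$ with $B(t)=\int_0^t b(r)\,dr$, which turns \eqref{eq:ode-for-sol} into the form $y''=Vy$ treated in Lemma~\ref{lem:ODEprelim}. A direct computation gives
\[
V(t)=a(t)^2+\frac{b(t)^2}{4}+\frac{b'(t)}{2},
\]
so I would apply Lemma~\ref{lem:ODEprelim} with the natural splitting $\varphi=a$ and $V_*=\frac{b^2}{4}+\frac{b'}{2}$; note $\varphi=a\in C^2$ and $V_*\in C$ by {\bf (A1)}, and that $\Phi=A$ in the notation of the lemma.

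Next I would verify the two hypotheses. For (i), positivity $a>0$ is {\bf (A1)}, while $a\notin L^1$ follows from {\bf (A3)}: since $\frac{d}{dt}(a^{-1})\to 0$ one has $a^{-1}(t)/t\to 0$, hence $a(t)\ge (2t)^{-1}$ for large $t$ and $A(t)\to\infty$. For (ii) I would use the identity
\[
-a^{-\frac12}(a^{-\frac12})''=\Big(\tfrac{d}{dt}(a^{-\frac12})\Big)^2-\tfrac12\tfrac{d^2}{dt^2}(a^{-1}),
\]
whose right-hand side lies in $L^1$ by the $L^2$- and $L^1$-conditions in {\bf (A2)}. The remaining part $V_*a^{-1}=\frac14(a^{-1}b)b+\frac12 a^{-1}b'$ is handled by writing $a^{-1}b'=\frac{d}{dt}(a^{-1}b)+b\frac{d}{dt}(a^{-1})$ and using that $\frac{d}{dt}(a^{-1}b)\in L^1$ ({\bf (A2)}), $b\in L^1$ ({\bf (A1)}), and $a^{-1}b$, $\frac{d}{dt}(a^{-1})$ are bounded ({\bf (A3)}); thus $W_{\varphi,V_*}\in L^1$.

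Lemma~\ref{lem:ODEprelim} then yields $\psi_-$ with $\psi_-(t)\sim a^{-1/2}e^{-A}$ and $[a^{1/2}\psi_-]'(t)\sim -a\,e^{-A}$, and I set $m_*=e^{-B/2}\psi_-$. Because $b\in L^1$ the factor $e^{-B/2}$ converges to $c_0:=e^{-B(\infty)/2}>0$; expanding $m_*'$ and discarding the terms $a^{-1}a'\psi_-$ and $b\psi_-$, which are negligible by {\bf (A3)} (since $a^{-2}a'=-\frac{d}{dt}(a^{-1})\to 0$ and $a^{-1}b\to 0$), gives
\[
m_*(t)\sim c_0\,a(t)^{-\frac12}e^{-A(t)},\qquad m_*'(t)\sim -c_0\,a(t)^{\frac12}e^{-A(t)}.
\]

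Finally I would upgrade these $t\to\infty$ asymptotics to two-sided bounds valid for all $t\ge0$. Near infinity the profile already fixes $m_*>0$ and $m_*'<0$, and the crucial point for propagating this globally is that $a^2>0$ on all of $[0,\infty)$ by {\bf (A1)}: at any critical point of $m_*$ the equation \eqref{eq:ode-for-sol} gives $m_*''=a^2m_*$, so a critical point where $m_*>0$ is a \emph{strict local minimum}. A rightmost critical point would then be a strict local minimum (as $m_*>0$ and decreasing to $0$ on its right) immediately to the right of which $m_*$ nonetheless decreases, a contradiction; hence $m_*$ has no critical point, is strictly monotone, and, matching the profile at infinity, satisfies $m_*>0$ and $m_*'<0$ throughout $[0,\infty)$. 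Consequently the ratios $m_*(t)/(a(t)^{-1/2}e^{-A(t)})$ and $-m_*'(t)/(a(t)^{1/2}e^{-A(t)})$ are continuous and strictly positive on $[0,\infty)$ and converge to $c_0>0$; hence they are bounded between positive constants, and choosing $\delta_*\in(0,1]$ no larger than the infimum of each ratio nor than the reciprocal of its supremum yields the asserted inequalities. I expect the main obstacle to be twofold: pinning down the splitting $V=\varphi^2+V_*$ so that condition (ii) matches exactly the integrability hypotheses {\bf (A2)}--{\bf (A3)} (the computation of $-a^{-1/2}(a^{-1/2})''$ is the crux), and the promotion of the asymptotic profile to uniform bounds on $[0,\infty)$, since Lemma~\ref{lem:ODEprelim} furnishes only behavior at infinity and the global positivity and monotonicity must be extracted separately from the differential equation.
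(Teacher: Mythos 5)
Your proposal is correct and follows essentially the same route as the paper: the substitution $m=e^{-B/2}y$, the splitting $\varphi=a$, $V_*=\frac{b'}{2}+\frac{b^2}{4}$ in Lemma~\ref{lem:ODEprelim}, the same verification of condition (ii) via $-a^{-1/2}(a^{-1/2})''=\bigl(\frac{d}{dt}(a^{-1/2})\bigr)^2-\frac12\frac{d^2}{dt^2}(a^{-1})$, and the same passage from asymptotics at infinity to uniform two-sided bounds by first fixing the signs of $m_*$ and $m_*'$ and then using compactness. The only deviations are harmless: the product rule should read $a^{-1}b'=\frac{d}{dt}(a^{-1}b)-b\frac{d}{dt}(a^{-1})$ (a sign typo that does not affect the $L^1$ conclusion), and for the sign propagation on the initial compact interval you use a rightmost-critical-point argument based on $m_*''=a^2m_*>0$ at a critical point, whereas the paper integrates the monotone quantity $e^{B}m_*'m_*$ backwards from $t_*$ — both are valid elementary alternatives.
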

\begin{proof}
Under the assumptions {\bf (A1)} and {\bf (A2)}, 
we can justify that the assumption in 
Lemma \ref{lem:ODEprelim}
with $(\varphi,V_*)=(a,\frac{b'}{2}+\frac{b^2}{4})$ is fulfilled. 
Indeed, we see from {\bf (A2)} that 
\begin{align*}
a^{-\frac{1}{2}}\frac{d^2}{dt^2}(a^{-\frac{1}{2}})
=
\frac{1}{2}\frac{d^2}{dt^2}(a^{-1})
-
\left|\frac{d}{dt}(a^{-\frac{1}{2}})\right|^2\in L^1((0,\infty)),
\end{align*}
and also $\frac{d}{dt}(a^{-1}), a^{-1}b\in L^\infty((0,\infty))$.
Therefore we also find 
\begin{gather*}
a^{-1}b'
=
\frac{d}{dt}\big(a^{-1}b\big)-\frac{d}{dt}\big(a^{-1}\big)b\in L^1((0,\infty)), 
\quad 
a^{-1}b^2
=
\big(a^{-1}b\big)b\in L^1((0,\infty)).
\end{gather*}
Applying Lemma \ref{lem:ODEprelim}, we find a function $\psi_-$ satisfying  
\[
\psi_-''=\left(a^2+\frac{b'}{2}+\frac{b^2}{4}\right)\psi_-
\]
with the large time behavior
\begin{align*}
a(t)^{\frac{1}{2}}e^{A(t)}\psi_{-}(t)
\to 1, 
\quad 
a(t)^{-1}e^{A(t)}[a^{\frac{1}{2}}\psi_{-}]'(t)\to -1
\quad\text{as}\ t\to \infty.
\end{align*}
Here we define  
\[
m_*(t)=\kappa_* e^{-\frac{1}{2}B(t)}\psi_-(t),\quad  t\geq0
\]
with $\kappa_*=e^{\frac{1}{2}B(+\infty)}$.
We can easily check that $m_*$ satisfies \eqref{eq:ode-for-sol} 
and 
\[
a(t)^{\frac{1}{2}}e^{A(t)}m_*(t) \to 1\quad\text{as}\ t\to \infty.
\]
On the one hand, 
by virtue of {\bf (A3)}, we have
\begin{align*}
a(t)^{-\frac{1}{2}}e^{A(t)}m_*'(t)
&=
\kappa_* e^{-\frac{1}{2}B(t)}
a(t)^{-\frac{1}{2}}e^{A(t)}
\Big(
\psi_-'(t)-\frac{b(t)}{2}\psi_-(t)
\Big)
\\
&=
\kappa_* e^{-\frac{1}{2}B(t)}
\left[
a(t)^{-1}e^{A(t)}
[a^{\frac{1}{2}}\psi_-]'(t)
-
\frac{1}{2}\Big(\frac{a'(t)}{a(t)^2}+\frac{b(t)}{a(t)}\Big)
a(t)^{\frac{1}{2}}e^{A(t)}
\psi_-(t)\right]
\\
&\to -1\quad\text{as}\ t\to \infty.
\end{align*}
Therefore there exists a positive constant $t_*$ such that 
for every $t\geq t_*$, 
\[
\frac{1}{2}a(t)^{-\frac{1}{2}}e^{-A(t)}
\leq m_*(t)\leq 
2a(t)^{-\frac{1}{2}}e^{-A(t)}, 
\ \ 
-2a(t)^{\frac{1}{2}}e^{-A(t)}
\leq m_*'(t)\leq 
-\frac{1}{2}a(t)^{\frac{1}{2}}e^{-A(t)}. 
\]
For the property in $[0,t_*]$, 
we can see from the equation \eqref{eq:ode-for-sol}, that 
\begin{align*}
\Big(e^{B(t)}m_*'(t)m_*(t)\Big)'
&=\big(e^{B(t)}m_*'(t)\big)'m_*(t)+e^{B(t)}m_*'(t)^2
\\
&=e^{B(t)}a(t)^2m_*(t)^2+e^{B(t)}m_*'(t)^2\geq 0. 
\end{align*}
Integrating it over $[t,t_*]$ we can find for all $t\in [0,t_*]$, 
\[
e^{B(t)}m_*'(t)m_*(t)\leq e^{B(t_*)}m_*'(t_*)m_*(t_*)
\leq - \frac{1}{4}e^{B(t_*)}e^{-2A(t_*)}< 0.
\]
This means that both of the functions $m_*$ and $m_*'$ do not change their sign.
Since the interval $[0,t_*]$ is compact, we can reach the desired estimate 
for $m_*$ and also $m_*'$.
The proof is complete.
\end{proof}

Using the function $m_*$ constructed in Lemma \ref{lem:sol-of-ode}, 
we define 
\[
\Phi_*(x,t)=m_*(t)\phi(x), \quad (x,t)\in \R^N\times (0,\infty)
\]
which is a typical solution of \eqref{eq:linear}. 
Then we focus our attention to the corresponding 
quantity 
\[
Q_*(u;t)=Q_{\Phi_*}(u;t)
=e^{B(t)}
\int_{\R^N} \big(m_*(t)\pa_tu(t)- m_*'(t)u(t)\big)\phi\,dx.
\]
It is worth noticing that 
under the assumption of Theorem \ref{thm:main},  
the initial value of this quantity is positive since
\begin{align*}
Q_*(u;0)
&=\ep \int_{\R^N} \big(m_*(0)g- m_*'(0)f\big)\phi\,dx
\\
&\geq \delta_* \ep 
\left(
a(0)^{-\frac{1}{2}} \int_{\R^N}g \phi\,dx
+
a(0)^{\frac{1}{2}}\int_{\R^N} f\phi\,dx\right)
\\&=\delta\ep >0
\end{align*}
for some $\delta>0$.
\begin{proof}[The rest of the proof of Theorem \ref{thm:main}]
Applying Lemma \ref{lem:inv}, we have
\begin{align*}
&
-
\frac{d}{dt}\left(
\psi_R Q_*(u;t)
-
e^{B(t)}\psi_R'm_*\int_{\R^N}u\phi\,dx\right)
+e^{B(t)}\psi_Rm_*\int_{\R^N}|u|^p\phi\,dx
\\
&=
\Big(
2e^{B(t)}\psi_R'm_*'
+(e^{B(t)}\psi_R')'m_*\Big)\int_{\R^N}u\phi\,dx,
\end{align*}
where $\psi_R$ is given in \eqref{eq:psi_R}.
Integrating it over $[0,R]$
and applying Lemmas \ref{lem:cutoff-estimate} and \ref{lem:sol-of-ode}, 
we have 
\begin{align*}
Q_*(u;0)&\leq Q_*(u;0)+
\int_0^R
e^{B(t)}\psi_Rm_*\int_{\R^N}|u|^p\phi\,dx
\,dt
\\
&=
\int_{{\rm supp}\,\psi_R'}
\Big(
2e^{B(t)}\psi_R'm_*'
+(e^{B(t)}\psi_R')'m_*\Big)\int_{\R^N}u\phi\,dxdt
\\
&\leq 
\frac{2C}{\delta_* R}
\int_{{\rm supp}\,\psi_R'}
\Big(
1+\frac{1}{2R a(t)}\Big)a(t)^{\frac{1}{2}}e^{-A(t)}\psi_R^{\frac{1}{p}}\|u\|_{L^p}\|\phi\|_{L^{p'}({\rm supp}\,u(t))}\,dt.
\end{align*}
Noting that the conditions {\bf (A1)} and {\bf (A3)} also give
the boundedness of 
$\frac{1}{(1+t)a(t)}$, we can see that 
\begin{align*}
\delta \ep 
&\leq 
C'
\int_{{\rm supp}\,\psi_R'}
a(t)^{\frac{1}{2}}e^{-A(t)}\psi_R^{\frac{1}{p}}\|u\|_{L^p}\|\phi\|_{L^{p'}({\rm supp}\,u(t))}\,dt
\\
&\leq 
C'
\left(
\int_{{\rm supp}\,\psi_R'}
a(t)^{\frac{p'}{2}}e^{-p'A(t)}\|\phi\|_{L^{p'}({\rm supp}\,u(t))}^{p'}\,dt\right)^{\frac{1}{p'}}
\left(
\int_0^R
\psi_R \|u\|_{L^p}^p\,dt\right)^{\frac{1}{p}}.
\end{align*}
Combining the estimate 
\[
\|\phi\|_{L^{p'}({\rm supp}\,u(t))}^{p'}\leq 
C''(1+r_0+A(t))^{N-1-\frac{N-1}{2}p'}e^{p'A(t)}
\]
(see e.g., \cite{YordanovZhang2006}) and Proposition \ref{prop:TFM1}, we arrive at 
\[
\ep \leq C'''
R^{-\frac{p^2+1}{p(p-1)}}(r_0+A(R))^{\frac{N}{p}}
\left(
\int_{{\rm supp}\,\psi_R'}
a(t)^{\frac{p'}{2}}(1+r_0+A(t))^{N-1-\frac{N-1}{2}p'}
\,dt\right)^{\frac{1}{p'}}
\]
for some positive constant $C'''$.
\end{proof}

\section*{Appendix}
\renewcommand{\thesection}{A}
\setcounter{theorem}{0}
\setcounter{equation}{0}

We shall give a proof of existence of local-in-time strong solutions 
to \eqref{intro:Pab} from the viewpoint of the Liouville transform.
The statement is the following:
\begin{proposition}\label{prop:appendix}
Assume that $0<a\in C^2([0,\infty))$, $b\in C([0,\infty))$
and $(u_0,u_1)\in H^2(\R^N)\times H^1(\R^N)$. 
If $p$ satisfies \eqref{intro:p:existence},
then there exists a positive constant $T$ such that the problem 
\begin{equation}\label{appendix:problem}
\begin{cases}
\pa_t^2u-a(t)^2\Delta u + b(t)\pa_tu=|u|^p
&\text{in}\ \R^N\times (0,T), 
\\
(u,\pa_tu)(0)=(u_0,u_1)
&\text{in}\ \R^N
\end{cases}
\end{equation}
has a unique strong solution $u$. Additionally, if ${\rm supp}\,u_0\cup 
{\rm supp}\,u_1\subset B(0,r_0)$, then the solution $u$ satisfies  
\[
{\rm supp}\,u_1\subset 
B\left(0,r_0+\int_0^ta(r)\,dr\right).
\]
\end{proposition}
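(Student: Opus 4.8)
The plan is to use the Liouville transform in the time variable to reduce \eqref{appendix:problem} to a semilinear wave equation with unit propagation speed, and then run a standard contraction argument in the energy space. First I would introduce the new time $s=A(t)=\int_0^t a(r)\,dr$, which is a strictly increasing $C^3$ map with $A'=a>0$, hence a $C^3$-diffeomorphism of $[0,\infty)$ onto its image. Setting $v(x,s)=u(x,t)$ and using $\pa_t=a\,\pa_s$, $\pa_t^2=a^2\pa_s^2+a'\pa_s$, the principal part becomes $a(t)^2(\pa_s^2-\Delta)$, so dividing \eqref{appendix:problem} by $a(t)^2$ turns it into
\[
\pa_s^2v-\Delta v+c_1(s)\pa_sv=c_2(s)|v|^p,
\]
with $c_1=(a'+ab)a^{-2}$ and $c_2=a^{-2}$ evaluated at $t=A^{-1}(s)$; both are continuous and bounded on every compact $s$-interval, and the data transform to $(u_0,a(0)^{-1}u_1)\in H^2\times H^1$. (If one prefers, the further substitution $v=\rho(s)w$ with $\rho=\exp(-\tfrac12\int c_1\,ds)$ removes the first-order term at the cost of a bounded potential, but this is inessential for local existence.)

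Next I would recast the transformed equation in Duhamel form using the free wave propagator for $\pa_s^2-\Delta$, treating $G:=-c_1\pa_sv+c_2|v|^p$ as the source, and carry out a Banach fixed-point argument in
\[
X_S=C([0,S];H^2(\R^N))\cap C^1([0,S];H^1(\R^N))\cap C^2([0,S];L^2(\R^N))
\]
with the natural energy norm. The standard energy estimate for the linear wave equation with an $H^1$-valued source bounds each iterate in $X_S$ in terms of the data and $\|G\|_{C([0,S];H^1)}$, the constant depending on $S$ only through $c_1,c_2$. Since $c_1,c_2$ depend on $s$ alone, the linear term $c_1\pa_sv$ is automatically $H^1$-valued on $X_S$; choosing $S$ small then makes the solution map a contraction on a ball of $X_S$, giving a unique solution on $[0,S]$, and inverting $s=A(t)$ returns a strong solution on $[0,T]$ with $T=A^{-1}(S)$.

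The hard part will be the nonlinear estimate, and this is exactly where the restriction \eqref{intro:p:existence} on $p$ enters. I must show that $w\mapsto|w|^p$ maps $H^2(\R^N)$ into $H^1(\R^N)$ and is Lipschitz on bounded sets, so that $G$ is $H^1$-valued and the contraction closes. From $\nabla(|w|^p)=p|w|^{p-1}(\operatorname{sgn}w)\nabla w$ I would estimate $\||w|^{p-1}\nabla w\|_{L^2}$ by Hölder, using $\nabla w\in H^1\hookrightarrow L^{2N/(N-2)}$ together with $w\in H^2\hookrightarrow L^{2N/(N-4)}$ for $N\ge5$; the requirement $|w|^{p-1}\in L^N$ then forces $N(p-1)\le 2N/(N-4)$, i.e. $p\le\frac{N-2}{N-4}$, matching \eqref{intro:p:existence}, while for $N\le4$ the embeddings are favourable enough that every $p>1$ is admissible. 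The difference estimate relies on $\big||w_1|^p-|w_2|^p\big|\le C(|w_1|^{p-1}+|w_2|^{p-1})|w_1-w_2|$ and its differentiated analogue, handled by the same Hölder--Sobolev bookkeeping.

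Finally, the finite speed of propagation follows from the corresponding property of the transformed equation: since $\pa_s^2-\Delta$ has unit speed and the lower-order terms $c_1\pa_sv$ and $c_2|v|^p$ are pointwise (local) operators, a standard energy-in-backward-cones argument gives $\supp v(\cdot,s)\subset B(0,r_0+s)$ whenever $\supp u_0\cup\supp u_1\subset B(0,r_0)$. Transforming back through $s=A(t)$ yields $\supp u(t)\subset B(0,r_0+A(t))$, which is \eqref{intro:FSP}. I expect the only genuine subtlety beyond routine energy estimates to be the $H^1$-mapping property of the nonlinearity just described; the remainder is a standard contraction-mapping local existence scheme.
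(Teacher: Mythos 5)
Your route is essentially the paper's: change the time variable via $s=A(t)$ to reduce to a wave equation with unit speed, treat the remaining lower-order terms and the nonlinearity as a Duhamel source, close a fixed-point argument using the Sobolev mapping property $H^2(\R^N)\ni w\mapsto |w|^p\in H^1(\R^N)$ (which is exactly where \eqref{intro:p:existence} enters, and your H\"older--Sobolev bookkeeping for it is correct), and transport finite speed of propagation back through $A$. The only cosmetic difference is the normalization: the paper substitutes $u(x,t)=a(t)^{-1/2}w(x,A(t))$, which trades part of the first-order coefficient for a bounded potential, while you keep $v(x,s)=u(x,t)$; both leave continuous, locally bounded coefficients of $s$ and are equally serviceable.

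There is, however, one step that does not go through as you state it. You propose to run the contraction in $X_S$ ``with the natural energy norm,'' i.e.\ to make the solution map contractive in the $C([0,S];H^2)\cap C^1([0,S];H^1)$ topology, and you appeal to the ``differentiated analogue'' of $\bigl||w_1|^p-|w_2|^p\bigr|\leq C(|w_1|^{p-1}+|w_2|^{p-1})|w_1-w_2|$. For $1<p<2$ (which is allowed in every dimension by \eqref{intro:p:existence}) that differentiated analogue fails: $\nabla(|w_1|^p)-\nabla(|w_2|^p)$ contains the term $p\bigl(|w_1|^{p-1}\operatorname{sgn}w_1-|w_2|^{p-1}\operatorname{sgn}w_2\bigr)\nabla w_1$, and $t\mapsto |t|^{p-1}\operatorname{sgn}t$ is only H\"older continuous of order $p-1<1$, so $w\mapsto|w|^p$ is not Lipschitz from $H^2$ into $H^1$ on bounded sets. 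The standard repair, and the one the paper uses, is to take as the fixed-point set a closed ball of $L^\infty(0,S;H^2\times H^1)$ (intersected with $C([0,S];H^1\times L^2)$) but to measure contraction only in the weaker metric $d(\mathcal{W}_1,\mathcal{W}_2)=\max_{0\leq s\leq S}\|\mathcal{W}_1-\mathcal{W}_2\|_{H^1\times L^2}$; the difference estimate then only needs the undifferentiated inequality together with the a priori $H^2$ bound from the ball, and completeness of the ball in the weak metric (via weak-$*$ lower semicontinuity) keeps the fixed point in $H^2\times H^1$. A posteriori one upgrades the fixed point to a genuine strong solution by solving the linear inhomogeneous problem with the frozen source, as the paper does. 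With this modification your argument closes; without it, the contraction step is unjustified for small $p$.
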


\begin{proof}
To find a local-in-time solution of \eqref{appendix:problem}, 
we employ the Liouville transform, 
that is, we set a new unknown function $w(x,s)$ as
\[
u(x,t)=a(t)^{-\frac{1}{2}}w\big(x,A(t)\big), \quad A(t)=\int_0^ta(r)\,dr, \quad t\in(0,T).
\]
Observing that 
\begin{align*}
\pa_tu(x,t)
&=
a(t)^{\frac{1}{2}}\pa_s w\big(x,A(t)\big)
+(a(t)^{-\frac{1}{2}})'w\big(x,A(t)\big),
\\
\pa_t^2u(x,t)
&=
a(t)^{\frac{3}{2}}\pa_s^2 w\big(x,A(t)\big)
+(a(t)^{-\frac{1}{2}})''w\big(x,A(t)\big),
\\
\Delta u(x,t)
&=
a(t)^{-\frac{1}{2}}\Delta w\big(x,A(t)\big),
\end{align*}
we can see that the problem \eqref{appendix:problem} can be translated into 
the following evolution problem with respect to the variable $s=A(t)$
\begin{equation}\label{appendix:problem-translated}
\begin{cases}
\pa_s^2 w
-
\Delta w
+
\widetilde{b}(s)\pa_s w
+\widetilde{V}(s)w
=
\widetilde{c}(s)|w|^p, \quad \text{in}\ \R^N\times (0,S),
\\
(w,\pa_sw)(0)=(w_0,w_1)=\big(a(0)^{\frac{1}{2}}u_0, a(0)^{-\frac{1}{2}}u_1+\frac{1}{2}a(0)^{-\frac{3}{2}}a'(0)u_0\big),
\end{cases}
\end{equation}
where 
$S>0$ is determined later, and  
 $\widetilde{b}$, $\widetilde{V}$ and $\widetilde{c}$ are 
continuous functions given by 
\begin{align*}
%\widetilde{b}(A(t))=\frac{b(t)}{a(t)}, 
\widetilde{b}(A(t))=\Theta(t)^2b(t), 
\quad
\widetilde{V}(A(t))=\Theta(t)^3\big(
\Theta''(t)+b(t)\Theta'(t)\big), 
\quad
\widetilde{c}(A(t))=
\Theta(t)^{p+3}
\end{align*}
with $\Theta(t)=a(t)^{-\frac{1}{2}}$.
Then setting 
\[
G(s,w,\pa_s w)=-\widetilde{b}(s)\pa_s w-\widetilde{V}(s)w+\widetilde{c}(s)|w|^p,
\]
we find that \eqref{appendix:problem} is equivalent (at least local-in-time) to 
the following semilinear wave equation 
with {\it constant speed of propagation}\/:
\begin{equation}\label{appendix:problem2}
\begin{cases}
\pa_s^2 w-\Delta w=G(s,w,\pa_s w), \quad \text{in}\ \R^N\times (0,S),
\\
(w,\pa_sw)(0)=(w_0,w_1)\in H^2(\R^N)\times H^1(\R^N).
\end{cases}
\end{equation}
Then we use the standard argument with the contraction mapping principle 
in the space $\mathcal{H}=H^1(\R^N)\times L^2(\R^N)$.
Define the linear quasi-$m$-dissipative operator $\mathcal{L}(w,z)=(z,\Delta w)$
in $\mathcal{H}$ endowed with the domain $D(\mathcal{L})=H^2(\R^N)\times H^1(\R^N)$
and 
the nonlinear map 
$\mathcal{G}(s,(w,z))=(0,G(s,w,z))$ which corresponds to the term $G(s,w,\pa_s w)$.
Then 
by virtue of the fact $|f|^p\in H^1(\R^N)$ for $f\in H^2(\R^N)$ 
under \eqref{intro:p:existence}, 
by a straightforward calculation, 
we can choose a sufficiently small $S>0$ such that 
the following map $\Xi$ is well-defined and contractive:
\[
\Xi \mathcal{W}(s)
=
e^{s\mathcal{L}}(w_0,w_1)+\int_0^{s}e^{(s-r)\mathcal{L}}
\mathcal{G}(r,\mathcal{W}(r))\,dr, 
\quad s\in[0,S]
\]
defined in  
\[
\mathcal{X}_S=
\left\{\mathcal{W}\in C([0,S];\mathcal{H})\cap 
L^\infty(0,S;D(\mathcal{L}))
\;;\; \big\|\|\mathcal{W}(\cdot)\|_{D(\mathcal{L})}\big\|_{L^\infty((0,S))}\leq 2\|(w_0,w_1)\|_{D(\mathcal{L})}\right\}
\]
with the metric $d(\mathcal{W}_1,\mathcal{W}_2)=\max\limits_{0\leq s\leq S}\|\mathcal{W}_1-\mathcal{W}_2\|_{\mathcal{H}}$ 
to itself. Therefore we can find the fixed point $\mathcal{W}^\sharp=(w^\sharp,
{\color{blue}z}^\sharp)\in \mathcal{X}_S$ of $\Xi$.
Then we further consider 
the inhomogeneous problem 
\begin{equation*}%\label{appendix:eq_for_s}
\begin{cases}
\pa_s^2 w-\Delta w=G(s,w^\sharp(s),z^\sharp(s)), \quad \text{in}\ \R^N\times (0,S),
\\
(w,\pa_sw)(0)=(w_0,w_1)\in H^2(\R^N)\times H^1(\R^N).
\end{cases}
\end{equation*}
The above problem has the strong solution 
$w\in C^2([0,S];L^2(\R^N))\cap C^1([0,S];H^1(\R^N))\cap C([0,S];H^2(\R^N))$
which satisfies 
\[
(w(s),\pa_sw(s))=
e^{s\mathcal{L}}(w_0,w_1)+\int_0^{s}e^{(s-r)\mathcal{L}}
\mathcal{G}(r,\mathcal{W}^\sharp(r))\,dr
=\Xi\mathcal{W}^\sharp(s)=\mathcal{W}^\sharp(s).
\]
This means that $w^\sharp=w$ is the strong solution of the problem \eqref{appendix:problem2}. Finally, 
choosing $T$ as $A(T)=S$ and 
setting 
\[
u(x,t)=a(t)^{-\frac{1}{2}}w\big(x,A(t)\big), \quad t\in [0,T]
\]
with the solution $w$ of \eqref{appendix:problem-translated},
we can find that $u$ is the strong solution of \eqref{appendix:problem}.

Furthermore if $u_0$ and $u_1$ satisfy 
${\rm supp}\,u_0\cup {\rm supp}\,u_1 \subset B(0,r_0)$,
then the corresponding initial data 
$(w_0,w_1)=\big(a(0)^{\frac{1}{2}}u_0, a(0)^{-\frac{1}{2}}u_1+\frac{1}{2}a(0)^{-\frac{3}{2}}a'(0)u_0\big)$ 
for the problem \eqref{appendix:problem-translated} 
are also supported in $B(0,r_0)$. 
The constant speed of propagation for the usual wave equation 
shows that the complete metric subspace 
\[
\mathcal{X}_{S,r_0}=\{\mathcal{W}=(w,{\color{blue}z})\in 
\mathcal{X}_S\;;\;
{\rm supp}\,w(s)\cup {\rm supp}\,z(s) \subset B(0,r_0+s)\}
\]
of $\mathcal{X}_S$ is invariant under the map $\Xi$. This 
implies $\mathcal{W}^\sharp\in \mathcal{X}_{S,r_0}$. 
As a consequence, we additionally 
obtain ${\rm supp}\,u(t)\subset B(0,r_0+A(t))$.
\end{proof}

\subsection*{Acknowedgements}
This work was supported by JSPS KAKENHI Grant Numbers JP24K06811, JP22H00097.

\subsection*{Declarations}
\begin{description}
\item[Data availability] Data sharing not applicable to this article as no datasets were generated or analysed during the current study.
\item[Conflict of interest] 
%The author declares that he has no conflict of interest.
The authors declare that they have no conflict of interest.
\end{description}

%%%%%%%%%%%%%%                   %%%%%%%%%%%%%%
%%%%%%%%%%%                         %%%%%%%%%%%
%%%%%%%%           References          %%%%%%%%
%%%%%%%%%%%                         %%%%%%%%%%%
%%%%%%%%%%%%%%                   %%%%%%%%%%%%%%
%A B C D E F G H I J K L M N O P Q R S T U V W X Y Z
{\small 

}
\end{document}